\theoremstyle{definition}
\newtheorem{mytheor}{Theorem}
\newtheorem{prop}{Proposition}[section]
\newtheorem{lem}{Lemma}[section]
\newtheorem{deff}{Definition}[section]
\newtheorem{theor}{Theorem}[section]
\newtheorem{cor}{Corolary}[section]
\newtheorem{quest}{Question}[section]
\newtheorem{exampl}{Example}[section]
\newtheorem{rem}{Remark}[section]
\newcommand{\N}{\mathbb{N}}
\newcommand{\Z}{\mathbb{Z}}
\newcommand{\meas}{\mathcal{M}}
\newcommand{\invmeas}{\mathcal{IM}}
\newcommand{\join}{\mathcal{J}}
\newcommand{\expect}{\mathbb{E}}
\DeclareMathOperator{\proj}{proj}
\DeclareMathOperator{\pord}{pOrd}
\DeclareMathOperator{\ord}{tOrd}
\DeclareMathOperator{\ext}{Ext}
\DeclareMathOperator{\gpSL}{SL}
\DeclareMathOperator{\ordext}{OrdExt}
\DeclareMathOperator{\orb}{Orb}
\DeclareMathOperator{\sml}{sml}
\newcommand{\smlp}{\sml_\sqsubset^+}
\newcommand{\smlm}{\sml_\sqsubset^-}
\newcommand{\smlbp}{\sml_{\overline\sqsubset}^+}
\newcommand{\smlbm}{\sml_{\overline\sqsubset}^-}
\newcommand{\bsmlp}{\overline{\sml}_\sqsubset^+}
\newcommand{\bsmlm}{\overline{\sml}_\sqsubset^-}
\newcommand{\bsqsub}{\overline{\sqsubset}}
\newcommand{\acts}{\curvearrowright}
\newcommand{\wand}{\mathcal{W}}
\newcommand{\repa}{\alpha}
\DeclareMathOperator{\lift}{Lift}
\begin{document}

\author{Andrei Alpeev\footnote{Euler Mathematical Institute at St. Petersburg State University\\alpeevandrey@gmail.com}}
\title{The invariant random order extension property is equivalent to amenability}

\maketitle
\begin{abstract}
Recently, Glasner, Lin and Meyerovitch gave a first example of a partial invariant order on a certain group that cannot be invariantly extended to an invariant random total order. Using their result as a starting point we prove that any invariant random partial order on a countable group could be invariantly extended to an invariant random total order iff the group is amenable.
\end{abstract}

\section{Introduction}

A {\em partial order} on a set $X$ is a binary relation $\prec$ that is:
\begin{enumerate}
\item {\em transitive}: $x \prec y$ and $y \prec z$ imply $x \prec z$,  for every $x,y,z \in X$;
\item {\em antisymmetric}: $x \prec y$ implies that not $y \prec x$,  for every $x, y \in X$;
\item {\em antireflexive}: not $x \prec x$, for every $x \in X$. 
\end{enumerate}
A {\em total(or linear) order} is an order that satisfies an additional requirement that 
\begin{enumerate}
\setcounter{enumi}{3}
\item {\em total}: $x \neq y$ implies either $x \prec y$ or $y \prec x$, for all $x,y \in X$.
\end{enumerate}
For any set $X$ we denote $\ord(X)$ the set of all total orders on $X$ and $\pord(X)$ the set of all partial orders on $X$. Note that if $X$ is a countable set, then both $\ord(X)$ and $\pord$ are metrizable compact sets (both are closed subsets of $2^{X \times X}$). For a partial order $\sqsubset \in \pord(X)$ we denote $\ext(\sqsubset)$ the set of all total orders on $X$ that extend $\sqsubset$, i.e. $x \sqsubset y$ implies $x \prec y$ for all $\prec \in \ext(\sqsubset)$ and $x,y \in X$. We denote $\ordext(X) \subset \pord(X) \times \ord(X)$ the set of all pairs $(\omega, \omega')$ such that $\omega \in \pord(X)$ and $\omega' \in \ext(\omega)$. Note that $\ordext(X)$ is a closed subset of $\pord(X) \times \ord(X)$.

Let $G$ be a countable group. There are two commuting left actions of $G$ on $\pord(G)$.  One defined by 
$x (g \prec) y$ iff $g^{-1}x \prec g^{-1} y$, we will call it the L-action. Another, we will cal it the R-action (but a left action nonetheless) is defined by $x (g\prec) y$ iff $xg \prec yg$. For the most part we will work with the R-action, unless otherwise is stated. A {\em right-invariant order (or a right-order)} on $G$ is an order which is fixed under the R-action of $G$. A group that admits a linear right-invariant order is called right-orderable. Orderings of groups form an old and fruitful area of research, we refer to \cite{KoM96}, \cite{Gla99} for classical results and to \cite{DNR14} for newer.

A {\em (right) invariant random partial order}(IRO) is a Borel probability measure $\nu$ on $\pord(G)$ that is invariant under the R-action of $G$. 

Invariant random orders were fruitfully used in entropy theory of measure preserving actions as a substitute for deterministic random orders (in particular, to produce a ``random past''), see \cite{Ki75}, \cite{StTZ80}, \cite{AMR21}, \cite{DOWZ21}. On the other hand, the use of dynamical methods in the order theory is presented in e.g. \cite{LM14}, \cite{DNR14}.

On every countable group there is a non-trivial invariant random linear order:
\begin{exampl}
Let $G$ be a countable group and let $G \acts [0,1]^G$ be the Bernoulli action with the unit interval endowed with the standard Lebesgue measure as a base space. For a random realization $(w_g)_{g \in G}$ of the corresponding process we assign an order on $G$ given by $g \prec h$ iff $w_g < w_h$, for $g,h \in G$. It is easy to check that the distribution of the order-valued random variable we described is indeed an invariant random liner order. 
\end{exampl}

More generally, the action of a countable group on its space of  linear orders is universal in the category of essentially free  actions on the standard probability space as was shown in \cite{GLM22}[Corollary 5.4]. 

Note that $\ord(G)$ is a closed invariant subset of $\pord(G)$ and $\ordext(G)$ is a closed invariant subset of $\pord(G) \times \ord(G)$ thus $\ordext(G)$ is endowed with a $G$-action.
\begin{deff}
An {\em extension} of the invariant random order $\nu$ is an invariant Borel probability measure $\lambda$ on $\ordext(G)$ such that $\proj_{\pord(G)}(\lambda) = \nu$. We say that a group has the {\em IRO-extension property} if for every invariant random order on $G$ there is an extension.
\end{deff}

Classical Rhemtulla-Formanek theorem \cite{R72}, \cite{F73} states that any partial right order on a torsion-free nilpotent group could be extended to a total right-order. This result fails already for torsion-free metabelian groups, see \cite{R72}. For invariant random orders extension is possible in a much larger class of groups.

\begin{theor}[\cite{St78}, \cite{AMR21}, Lemma 2.1]\label{thm: amenable gp has iro}
A countable amenable group has the IRO-extension property. 
\end{theor}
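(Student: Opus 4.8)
The plan is to realize the desired invariant extension as a fixed point of an affine action of $G$, exploiting that amenable groups enjoy the fixed-point property on compact convex sets. First I would consider the restriction of the projection $\proj = \proj_{\pord(G)}$ to $\ordext(G)$. Since every partial order extends to a total order (the order-extension principle of Szpilrajn), this map is a continuous surjection between compact metrizable spaces, and $\ordext(G)$ is itself compact. Consequently the induced pushforward map $\proj_* \colon \mathcal{P}(\ordext(G)) \to \mathcal{P}(\pord(G))$ on spaces of Borel probability measures, each equipped with the weak$^*$ topology, is a continuous affine surjection. In particular the fiber $P_\nu := \proj_*^{-1}(\{\nu\})$ is a nonempty, convex, weak$^*$-compact subset of $\mathcal{P}(\ordext(G))$.

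Next I would observe that the R-action of $G$ is built diagonally on $\pord(G) \times \ord(G)$, preserves the closed subset $\ordext(G)$, and makes $\proj$ equivariant. Hence $G$ acts on $\mathcal{P}(\ordext(G))$ by affine weak$^*$-homeomorphisms, and because $\nu$ is R-invariant this action preserves $P_\nu$: indeed $\proj_*(g\cdot\lambda) = g\cdot\proj_*\lambda = g\cdot\nu = \nu$ for every $\lambda \in P_\nu$. Thus $G$ acts affinely and continuously on the nonempty convex compact set $P_\nu$.

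Then I would invoke amenability. A countable amenable group has the fixed-point property: any continuous affine action on a nonempty compact convex subset of a locally convex topological vector space has a fixed point. Applying this to $P_\nu$ sitting inside the space of signed measures with the weak$^*$ topology yields a $G$-invariant $\lambda \in P_\nu$. By construction $\lambda$ is an invariant Borel probability measure on $\ordext(G)$ with $\proj_*\lambda = \nu$, which is exactly an extension of $\nu$, proving the IRO-extension property. Equivalently, and more concretely, one may produce $\lambda$ by Følner averaging: fix any $\lambda_0 \in P_\nu$ and take a weak$^*$ limit of the averages $\frac{1}{|F_n|}\sum_{g \in F_n} g\cdot\lambda_0$ along a Følner sequence $(F_n)$; the Følner property forces invariance of the limit, while weak$^*$-closedness of $P_\nu$ keeps it in the fiber.

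The only genuinely delicate point is the nonemptiness of $P_\nu$, i.e. the existence of at least one (not necessarily invariant) measure on $\ordext(G)$ projecting to $\nu$. I expect this to be the main thing to check carefully: it follows either from surjectivity of $\proj_*$ for continuous surjections of compact metric spaces, or, more explicitly, from a measurable selection $\omega \mapsto \sigma(\omega) \in \ext(\omega)$ of total extensions (available since $\ordext(G)$ has compact nonempty fibers over $\pord(G)$), which lets one push $\nu$ forward along $\omega \mapsto (\omega, \sigma(\omega))$. Everything else is routine: equivariance of $\proj$, affineness and weak$^*$-continuity of the $G$-action, and the standard fixed-point or averaging argument.
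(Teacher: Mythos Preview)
The paper does not supply its own proof of this theorem: it is stated with attributions to \cite{St78} and \cite{AMR21} and used as a known input. Your argument via the Day--Rickert fixed-point property (equivalently, F{\o}lner averaging on the compact convex fiber $P_\nu = \proj_*^{-1}(\{\nu\}) \subset \mathcal{P}(\ordext(G))$) is correct and is essentially the standard proof one finds in the cited references; in particular the nonemptiness of $P_\nu$ indeed follows from Szpilrajn's extension theorem together with either surjectivity of pushforwards along continuous surjections of compact metric spaces or a Borel selection of the compact-valued map $\omega \mapsto \ext(\omega)$. There is nothing further to compare.
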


The natural question is whether all groups have this property. A counterexample was obtained recently:

\begin{theor}[Glasner-Lin-Meyerovitch \cite{GLM22}]
$\gpSL_3(\Z)$ has no IRO-extension property.
\end{theor}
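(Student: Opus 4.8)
The plan is to refute the IRO-extension property for $G=\gpSL_3(\Z)$ by exhibiting a single invariant random order that admits no extension, and the cleanest candidate is a \emph{deterministic} right-invariant partial order $\sqsubset$, viewed as the Dirac measure $\delta_\sqsubset$. The first step is a reduction. Since $\sqsubset$ is fixed by the R-action, the set $\ext(\sqsubset)$ of total orders extending it is a compact, R-invariant subset of $\ord(G)$: if $\sqsubset\subseteq\prec$, then applying $g$ gives $\sqsubset=g\cdot\sqsubset\subseteq g\cdot\prec$. Moreover, any extension $\lambda$ of $\delta_\sqsubset$ is supported on the fibre $\{\sqsubset\}\times\ext(\sqsubset)$, so it has the form $\delta_\sqsubset\otimes\mu$ with $\mu$ an R-invariant probability measure on $\ext(\sqsubset)$. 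Thus the whole problem becomes: find a right-invariant partial order $\sqsubset$ on $\gpSL_3(\Z)$ such that the R-action on the compact space $\ext(\sqsubset)$ carries no invariant probability measure.

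Next I would produce the obstruction to invariant measures out of the linear action $\gpSL_3(\Z)\acts\Z^3$, which is the source of non-amenable dynamics for this group. The aim is to design the positive cone of $\sqsubset$, a sub-semigroup of $G$ adapted to the action on primitive vectors (equivalently, to a maximal unipotent subgroup), so that every total extension $\prec\in\ext(\sqsubset)$ determines, in a continuous and $G$-equivariant way, an asymptotic ``direction'' along which the order escapes to infinity. Concretely one wants a continuous surjection $\Phi\colon\ext(\sqsubset)\to\mathbb{P}^2(\R)$ intertwining the R-action on orders with the projective linear action of $\gpSL_3(\Z)$ on $\mathbb{P}^2(\R)$ (the target could equally be the full flag variety $\gpSL_3(\R)/B$). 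The partial order must be rich enough to pin down such a direction from the way it linearly orders the orbit of a base vector, yet loose enough that extensions exist over every fibre.

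Granting $\Phi$, the conclusion is immediate: an R-invariant probability measure $\mu$ on $\ext(\sqsubset)$ would push forward under $\Phi$ to a $\gpSL_3(\Z)$-invariant probability measure on $\mathbb{P}^2(\R)$. But a higher-rank lattice admits no such measure, since hyperbolic elements with distinct eigenvalues act with north--south dynamics, so Furstenberg's unique stationary measure on the projective space is not invariant, which forbids any invariant measure. This contradiction shows that $\delta_\sqsubset$ does not extend, and hence that $\gpSL_3(\Z)$ fails the IRO-extension property.

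The main obstacle is squarely the construction in the second step: defining $\sqsubset$ and proving that passing to a total extension forces a coherent, equivariant, and \emph{surjective} choice of boundary point. A priori an extension enjoys enormous combinatorial freedom, so the delicate work is to show that the chosen positive cone rigidifies this freedom just enough to read off a genuine direction in $\mathbb{P}^2(\R)$, while keeping $\ext(\sqsubset)$ nonempty over each fibre; I expect the precise root and unipotent structure of $\gpSL_3$ to enter here. It is worth noting that the engine of the argument, a compact $G$-boundary without invariant measure, is exactly what is available for an arbitrary non-amenable group, which is what makes this example the natural seed for the general equivalence with amenability that is the goal of this paper.
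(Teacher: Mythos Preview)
This theorem is cited from \cite{GLM22} rather than proved in the paper, but Section~\ref{sec: free} reproduces the argument closely (while lifting it to the free group), so a comparison is possible. Your opening reduction agrees with the paper and with \cite{GLM22}: one takes $\sqsubset$ to be the deterministic left-invariant order generated by the semigroup of non-negative matrices and shows that $\ext(\sqsubset)$ carries no invariant probability measure. After that the routes diverge sharply. The actual argument singles out six elementary unipotents $a_1,\dots,a_6$ with $a_i$ commuting with $a_{i\pm 1}$ cyclically, proves $\ext(\sqsubset)=\smlp\cup\smlm$ where the pieces record whether $a_i^{-q}a_{i\pm 1}^n\prec e$ for all $n$ (Lemma~\ref{lem: free_sl_extension_decomposition}), and then shows each piece lies in the class $\wand$ of countable unions of \emph{wandering} sets by exhibiting translates whose common intersection would force a contradictory cycle $a_1^{-(q+1)}\prec a_2^{-(q+1)}\prec\cdots\prec a_6^{-(q+1)}\prec a_1^{-(q+1)}$ (see Lemmata~\ref{lem: free bsmlp in wand} and \ref{lem: free bsmlm in wand} for the lifted version). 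No boundary space and no equivariant map appear.

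Your route via a continuous equivariant $\Phi:\ext(\sqsubset)\to\mathbb{P}^2(\R)$ is plausible in spirit but has a genuine gap: you neither specify $\sqsubset$ beyond a wish-list nor construct $\Phi$, and you yourself flag this as ``the main obstacle.'' Reading off a single projective direction from an arbitrary total extension is exactly the delicate step, and there is no evident reason the combinatorial freedom in extending $\sqsubset$ should collapse equivariantly and continuously to a well-defined boundary point. The GLM22 proof bypasses this issue entirely with the wandering-set mechanism, and that is also what the present paper lifts to $F_2$; the general non-amenable case is then handled via Gaboriau--Lyons rather than via any boundary map, so your closing heuristic about the ``engine of the argument'' does not match how the paper actually proceeds.
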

Our main result is the following:
\begin{mytheor}\label{thm: main group}
A countable non-amenable group has no IRO-extension property.
\end{mytheor}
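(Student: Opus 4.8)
The plan is to recast the IRO-extension property as an affine fixed-point property and then exploit the failure of the latter for non-amenable groups. Fix an IRO $\nu$ on $\pord(G)$ and let $M_\nu \subseteq P(\ordext(G))$ denote the set of Borel probability measures $\lambda$ with $\proj_{\pord(G)}(\lambda)=\nu$. I would first check that $M_\nu$ is a nonempty, convex, weak*-compact, $G$-invariant set on which $G$ acts by continuous affine maps: nonemptiness comes from the Kuratowski--Ryll-Nardzewski measurable selection theorem applied to the projection $\ordext(G)\to\pord(G)$, whose fibres are nonempty and compact by Szpilrajn's extension theorem; compactness follows from its being the preimage of $\{\nu\}$ under the continuous pushforward map; and invariance from equivariance of $\proj$ together with invariance of $\nu$. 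By construction, $\nu$ admits an extension precisely when the $G$-action on $M_\nu$ has a fixed point. This already re-proves Theorem \ref{thm: amenable gp has iro}: for amenable $G$, Day's fixed-point theorem supplies a fixed point in every such $M_\nu$.

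For the converse it suffices to exhibit, for each countable non-amenable $G$, a single IRO $\nu$ for which the affine action on $M_\nu$ has no fixed point. I would organize this around the following special case. If $\sqsubset$ is a deterministic right-invariant partial order on $G$, then $\ext(\sqsubset)\subseteq\ord(G)$ is a closed $G$-invariant set, $\nu=\delta_\sqsubset$ is an IRO, and $M_\nu$ is canonically identified with $P(\ext(\sqsubset))$; a fixed point is then exactly a $G$-invariant Borel probability measure on the compact metric $G$-space $\ext(\sqsubset)$. Thus it is enough to build a right-invariant partial order $\sqsubset$ on $G$ whose space of total extensions $\ext(\sqsubset)$ carries no $G$-invariant probability measure. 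If no deterministic order works for a given $G$, the same scheme runs with $\nu$ a genuinely random order and $M_\nu$ the associated bundle of fibrewise measures.

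The input from non-amenability is the classical equivalence that a countable group is amenable iff every action on a compact metrizable space admits an invariant probability measure; hence for non-amenable $G$ there is a compact metric $G$-space $Z$ with no invariant measure (equivalently, a metrizable compact convex affine $G$-space with no fixed point). The crux is to transport this obstruction into the order world: I would construct $\sqsubset$ together with a $G$-equivariant continuous surjection $\ext(\sqsubset)\twoheadrightarrow Z$, so that an invariant measure on $\ext(\sqsubset)$ would push forward to an invariant measure on $Z$, a contradiction. This is where Glasner--Lin--Meyerovitch enter as the prototype: their partial order on $\gpSL_3(\Z)$ is exactly an instance whose extension space fails to carry an invariant measure, and the task is to isolate the mechanism behind it and reproduce it from an arbitrary non-amenable witness $Z$, encoding points of $Z$ into coherent families of order comparisons along the orbit so that each total extension equivariantly determines a point of $Z$.

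The main obstacle is precisely this realization step: showing that the order-extension projection $\ordext(G)\to\pord(G)$ is universal enough to absorb an arbitrary equivariant surjection with the no-invariant-measure property, i.e.\ that the combinatorics of partial orders on $G$ are rich enough to encode a given compact metric $G$-space without an invariant measure while keeping $\sqsubset$ genuinely right-invariant and its extension space surjecting onto $Z$. Verifying transitivity and antisymmetry of the constructed order, continuity and surjectivity of the induced map, and the absence of an invariant measure on $\ext(\sqsubset)$ are the technical points I expect to consume most of the work; the amenable direction and the fixed-point reformulation are comparatively soft.
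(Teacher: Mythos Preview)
Your fixed-point reformulation is correct but essentially tautological: saying ``$\nu$ extends iff the $G$-action on $M_\nu$ has a fixed point'' is just a restatement of the definition, and the amenable direction via Day's theorem is indeed soft (and already in the literature). The entire content of the theorem lies in the step you label ``the crux'' and then do not carry out: producing, for an \emph{arbitrary} non-amenable $G$, some IRO $\nu$ whose $M_\nu$ is fixed-point free. Your proposed mechanism---encode a compact $G$-space $Z$ with no invariant measure into a right-invariant partial order $\sqsubset$ via an equivariant surjection $\ext(\sqsubset)\twoheadrightarrow Z$---is not a proof but a wish. There is no general procedure for turning points of a compact $G$-space into coherent order relations on $G$; the Glasner--Lin--Meyerovitch construction is highly specific to the arithmetic of $\gpSL_3(\Z)$, and ``isolating the mechanism'' is not something that has been done. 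Indeed, whether every non-amenable group admits a \emph{deterministic} invariant partial order whose extension space carries no invariant measure is posed as an open question in the paper (Question~1.2), so your primary route is not known to succeed.

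The paper's proof is entirely different and bypasses this obstacle. It first lifts the $\gpSL_3(\Z)$ example through an epimorphism to show $F_2$ has no IRO-extension property (Theorem~B). For general non-amenable $G$ one cannot pass through subgroups, since by Ol'shanski\u{\i} there exist non-amenable groups with no nonabelian free subgroup. Instead the paper moves to the measured category: it defines the IRO-extension property for pmp countable Borel equivalence relations, shows it agrees with the group property via orbit equivalence of free actions (Lemma~\ref{lem: iro group and equiv}), and shows it passes to subequivalence relations (Lemma~\ref{lem: iro equiv hereditary}). The Gaboriau--Lyons theorem then supplies, inside the orbit equivalence relation of the Bernoulli shift of any non-amenable $G$, the orbit relation of a free $F_2$-action, and the failure for $F_2$ propagates up to $G$. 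None of this appears in your outline; the Gaboriau--Lyons input (or something of comparable strength) is what actually does the work, and without it your argument has no way to reach non-amenable groups lacking free subgroups.
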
 
Thus we have a characterization:
\begin{cor}
A countable group has the IRO-extension property iff it is amenable.
\end{cor}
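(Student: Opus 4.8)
The Corollary is the conjunction of two implications: for amenable $G$ it is exactly Theorem~\ref{thm: amenable gp has iro}, while for non-amenable $G$ it is Theorem~\ref{thm: main group}. Thus the whole content lies in the non-amenable direction, and I sketch how I would establish it. The key reformulation is that the projection $\pi = \proj_{\pord(G)}\colon \ordext(G) \to \pord(G)$ is a continuous $G$-equivariant surjection of compact metrizable $G$-spaces --- surjectivity being the Szpilrajn extension theorem applied to each countable partial order --- and that an extension of an IRO $\nu$ is precisely a $G$-invariant probability measure $\lambda$ on $\ordext(G)$ with $\pi_*\lambda = \nu$. Hence $G$ has the IRO-extension property if and only if every $G$-invariant measure on $\pord(G)$ lifts along $\pi$.

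I would then isolate the amenability principle behind this. A countable group $G$ is amenable if and only if every compact metrizable $G$-space carries a $G$-invariant probability measure; equivalently, $G$ is amenable if and only if invariant measures lift along every $G$-equivariant continuous surjection of compact metrizable $G$-spaces. Indeed, if $G$ is amenable the set of measures projecting to a fixed invariant $\nu$ is a nonempty compact convex $G$-space, so it has a fixed point (this is the mechanism of Theorem~\ref{thm: amenable gp has iro}); conversely, if $G$ is non-amenable there is a compact $G$-space $Y$ with no invariant measure, and then $\delta_{*}$ on the one-point space fails to lift along the constant map $Y \to \{*\}$. To prove Theorem~\ref{thm: main group} it therefore suffices to realize such a lifting obstruction inside the single, structurally rigid surjection $\pi$.

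The cleanest target is a \emph{deterministic} witness: for non-amenable $G$, find a $G$-invariant partial order $\sqsubset_0$ --- so that $\delta_{\sqsubset_0}$ is an IRO --- whose fibre $\ext(\sqsubset_0)$, with the R-action, admits no $G$-invariant probability measure. Then $\delta_{\sqsubset_0}$ cannot extend, since any extension would be supported on $\{\sqsubset_0\} \times \ext(\sqsubset_0)$ and would restrict to an invariant measure on $\ext(\sqsubset_0)$. Concretely I would try to encode, $G$-equivariantly, a compact $G$-space $Y$ with no invariant measure (available by non-amenability) into the space of total extensions of $\sqsubset_0$, so that an invariant extension pushes forward to an invariant measure on $Y$, a contradiction. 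The Glasner--Lin--Meyerovitch example for $\gpSL_3(\Z)$ is exactly the prototype of such a rigid order with a paradoxical family of extensions, and the task is to perform an analogous construction uniformly across all non-amenable groups.

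I expect the encoding in the last step to be the main obstacle. Total orders are heavily constrained --- transitivity and antisymmetry couple all the comparabilities --- so embedding an arbitrary non-amenable dynamical obstruction into the extensions of one invariant partial order is delicate, and it is conceivable that no deterministic $\sqsubset_0$ works for certain groups, forcing a genuinely random IRO $\nu$ whose fibres carry the obstruction. The subtle point throughout is that one must defeat not merely continuous equivariant sections but all invariant lifts; this is what makes the statement a measure-dynamical fact equivalent to the failure of the F{\o}lner/fixed-point condition, rather than a purely topological one.
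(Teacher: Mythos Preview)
Your reduction of the Corollary to Theorem~\ref{thm: amenable gp has iro} plus Theorem~\ref{thm: main group} is correct and is exactly how the paper presents it. The issue is your sketch of Theorem~\ref{thm: main group}: the ``encoding'' step you yourself flag as the main obstacle is never carried out, so what you have written is a program, not a proof. The paper's route is entirely different and does not attempt any such encoding. It first proves the result for $F_2$ (Theorem~\ref{thm: free no iro}) by lifting the Glasner--Lin--Meyerovitch order along an epimorphism $F_2 \twoheadrightarrow \gpSL_3(\Z)$, obtaining a deterministic invariant partial order on $F_2$ whose set of total extensions carries no invariant measure. It then passes to \emph{pmp equivalence relations}: the IRO-extension property for a group is shown to be equivalent to that of the orbit equivalence relation of any of its essentially free pmp actions (Lemma~\ref{lem: iro group and equiv}), and the property is shown to pass to subequivalence relations (Lemma~\ref{lem: iro equiv hereditary}). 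The Gaboriau--Lyons theorem supplies, for every non-amenable $G$, a free pmp $G$-action whose orbit relation contains that of a free $F_2$-action, and chaining these lemmas with Theorem~\ref{thm: free no iro} yields Theorem~\ref{thm: main group}.

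Your deterministic approach --- a single $G$-invariant partial order $\sqsubset_0$ whose total extensions form a $G$-space with no invariant measure --- is precisely what the paper obtains for groups containing a nonabelian free subgroup, but the paper explicitly notes that Olshanski\u{\i}'s non-amenable groups without free subgroups obstruct this line in general. The detour through equivalence relations and Gaboriau--Lyons is not a stylistic choice: it is what replaces the missing free subgroup by a measurable one, and without it there is no known way to realize the obstruction you describe inside $\ext(\sqsubset_0)$ for an arbitrary non-amenable $G$. Your abstract lifting principle (amenability $\Leftrightarrow$ invariant measures lift along every equivariant surjection of compact $G$-spaces) is true but does not help here, since the failure of lifting along \emph{some} surjection does not produce a failure along the specific map $\ordext(G)\to\pord(G)$.
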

Our first step in that direction is to consider the case of a free group $F_2$ in Section \ref{sec: free}:
\begin{mytheor}\label{thm: free no iro}
The free group $F_2$ has no IRO-extension property. In fact, there is a (deterministic) invariant partial order on $F_2$ that cannot be extended to an invariant random total order. 
\end{mytheor}

Let $S$ be the subsemigroup in $\gpSL_3(\Z)$ consisting of matrices in the form $I + A$, where $I$ is the identity operator matrix and $A$ is a matrix with all non-negative entries and at least one strictly positive entry.
In their example \cite{GLM22}, Glasner, Lin and Meyerovitch showed that for the left-invariant order $\sqsubset$ on $\gpSL_3(\Z)$ generated by subsemigroup\footnote{any proper subsemigroup $S$ that does not contain the group identity, generates a partial left-order by $a \prec b$ iff $e \prec a^{-1}b$ iff $a^{-1}b \in S$} S, the set of extending total orders $\ext(\sqsubset)$ has no $G$-invariant probability measure. The technique used is a deep extention upon the argument of Witte-Morris  from \cite{W94} regarding the non-orderability of finite-index subgroups of $\gpSL_n(\Z)$.

The idea of our proof is to consider a factorization $\pi: F_2 \to \gpSL_3(\Z)$. Note that for any map $\varphi: X \to Y$ there is a corresponding map $\varphi^{Ord} : \pord(Y) \to \pord(X)$ defined by $x_1 (\varphi^{Ord} \prec) x_2$ iff $\varphi(x_1) \prec \varphi(x_2)$. It seems natural to assume that any counterexample to the IRO-extension property could be lifted over a factorization, but I don't know whether this is the case.
\begin{quest}
Let $\pi: G \to H$ be an epimorphism between groups. Is it true that for any IRO $\nu$ on $H$ that can't be extended, the IRO $\pi^{Ord}(\nu)$ on $G$ couldn't be extended also? What if $\nu$ is a $\delta$-measures, that is a deterministic invariant partial order?
\end{quest}
To tackle this problem we carefully tweak the construction from \cite{GLM22} to show that $\ext(\pi^{Ord}(\sqsubset))$ does not carry an invariant probability measure.

\begin{cor}
Any group that contains a nonabelian free subgroup, has a left-invariant partial order $\prec$ such that $\ext(\prec)$ does not carry an invariant probability measure.
\end{cor}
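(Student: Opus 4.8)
The plan is to transport the counterexample order on $F_2$ --- namely $\prec_0 := \pi^*(\sqsubset)$, whose extension set carries no invariant probability measure, as shown above --- to an arbitrary overgroup via the positive-cone correspondence for left-invariant orders. Recall (see the footnote) that a left-invariant partial order $\prec$ on a group is determined by its positive cone $P = \{g : e \prec g\}$: the cone is a subsemigroup with $e \notin P$ and $P \cap P^{-1} = \emptyset$, and $a \prec b \iff a^{-1}b \in P$.

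Since $G$ contains a nonabelian free subgroup, it contains a copy $H \cong F_2$. Let $S = \{h \in H : e \prec_0 h\}$ be the positive cone of $\prec_0$ inside $H$. Because $S \subset H$ is a subsemigroup with $e \notin S$ and $S \cap S^{-1} = \emptyset$, these relations persist when $S$ is viewed as a subset of $G$, so $S$ is the positive cone of a left-invariant partial order $\prec$ on $G$. For $a,b \in H$ we have $a \prec b \iff a^{-1}b \in S \iff a \prec_0 b$, hence the restriction of $\prec$ to $H \times H$ equals $\prec_0$.

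I would then consider the restriction map $r : \ext(\prec) \to \ord(H)$ sending a total order on $G$ to its restriction to $H$. If $\prec'$ extends $\prec$, then $\prec'|_H$ is a total order on $H$ extending $\prec|_H = \prec_0$, so $r$ in fact takes values in $\ext(\prec_0)$. The map $r$ is continuous, being (the restriction to $\ext(\prec)$ of) the coordinate projection $2^{G \times G} \to 2^{H \times H}$, and it is $H$-equivariant for the L-actions: for $h \in H$ and $a,b \in H$, using that $h^{-1}a, h^{-1}b \in H$, one computes $a \mathrel{(h\cdot\prec')|_H} b \iff h^{-1}a \prec' h^{-1}b \iff a \mathrel{(h\cdot(\prec'|_H))} b$, so that $r(h\cdot\prec') = h\cdot r(\prec')$.

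Finally, suppose for contradiction that $\ext(\prec)$ carried a $G$-invariant probability measure $\mu$ (for the L-action). Then $\mu$ is in particular $H$-invariant, so its pushforward $r_*\mu$ is an $H$-invariant probability measure supported on $\ext(\prec_0) = \ext(\pi^*(\sqsubset))$. This contradicts the fact, established above, that $\ext(\pi^*(\sqsubset))$ carries no $F_2$-invariant probability measure; hence $\ext(\prec)$ carries no invariant probability measure, which is the assertion of the corollary. I expect no serious obstacle here: the substance lies in the bookkeeping of the two commuting actions, and the only points needing genuine verification are that the subgroup's positive cone remains a valid cone in $G$ and that the restriction map is equivariant and lands in $\ext(\prec_0)$.
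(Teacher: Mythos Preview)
Your argument is correct and is precisely the natural deduction the paper has in mind: the paper does not spell out a proof of this corollary, but places it immediately after establishing that $\ext(\bsqsub)$ carries no $F$-invariant measure (Section~\ref{sec: free}), so the implicit argument is exactly yours---transport the positive cone of $\bsqsub$ from the free subgroup $H\cong F_2$ to $G$, and push any putative $G$-invariant measure on $\ext(\prec)$ forward along the $H$-equivariant restriction map to obtain a forbidden $H$-invariant measure on $\ext(\bsqsub)$. The verifications you flag (that the cone remains a valid cone in $G$, that restriction lands in $\ext(\prec_0)$, and $L$-equivariance of restriction under $H$) are all routine and correctly handled.
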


\begin{rem}
The proof of Theorem \ref{thm: free no iro} actually gives that for any (not necessarily free) group $F$ that projects onto $\gpSL_3(\Z)$, the lift of the order $\sqsubset$, defined above, cannot be extended to an invariant random linear order on $F$.
\end{rem}

\begin{quest}
Which groups admit a left-invariant order $\prec$ such that $\ext(\prec)$ does not have a left-invariant probability measure?
\end{quest}

By the famous Olshanski\u{\i}  counterexample \cite{Ol80}, \cite{Ol91} to the von Neumann-Day conjecture, there are non-amenable groups without nonabelian free subgroups, so the corolary above does not cover Theorem \ref{thm: main group}. To circumvent this issue we introduce, in Section \ref{sec: orders on equiv}, the notion of an IRO on an equivalence relation and utilize the Gaboriau-Lyons theorem to deduce Theorem \ref{thm: main group} from Theorem \ref{thm: free no iro}. We remind that the Gaboriau-Lyons theorem \cite{GL09} states that the orbit equivalence relation of the Bernoulli action $G \acts [0,1]^G$ for non-amenable group $G$ contains the orbit equivalence relation of an essentially free action of the free group $F_2$. We show that the IRO-extension property for a group and for an orbit equivalence relation of its free actions are equivalent (Lemma \ref{lem: iro group and equiv}), that the IRO-extension property passes to subequivalence relations (Lemma \ref{lem: iro equiv hereditary}). This suffices to prove Theorem \ref{thm: main group}. This type of technique was used by Monod and Ozawa \cite{MO09} to prove that lamplighters over non-amenable groups are not unitarizable, and by myself to prove that lamplighter over non-amenable groups are not strongly Ulam stable \cite{A22}.

Finally, we use a generalization of the Gaboriau-Lyons theorem to equivalence relations by Bowen, Hoff and Ioana \cite{BHI18} to prove a characterization of the IRO-extension property for Borel equivalence relations similar to Theorem \ref{thm: main group}. We refer the reader to section \ref{sec: orders on equiv} for the definition of an invariant random order on a pmp countable Borel equivalence relation and corresponding invariant random order extension property.

\begin{mytheor}\label{thm: equiv IRO}
A pmp countable Borel ergodic equivalence relation on a standard probability space has the IRO-extension property iff it is amenable.
\end{mytheor}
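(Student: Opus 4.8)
The plan is to prove Theorem \ref{thm: equiv IRO} by combining the two directions already available in the group setting with the equivalence-relation machinery sketched in the introduction, exactly paralleling the deduction of Theorem \ref{thm: main group} from Theorem \ref{thm: free no iro}. For the easy direction, I would show that an amenable pmp countable Borel equivalence relation has the IRO-extension property. Here I expect the proof of Theorem \ref{thm: amenable gp has iro} to adapt directly: amenability of the equivalence relation (in the sense of Connes--Feldman--Weiss) gives an invariant approximate means along the orbits, and one averages the extension procedure over larger and larger ``windows'' in each class, extracting a weak-$*$ limit to produce an invariant random total order extending the given partial one. The only care needed is to phrase everything measurably in the fibers and to use the measured analogue of the Følner condition; I would state this as the lemma that the IRO-extension property holds for amenable relations and reduce its proof to the group case via the standard hyperfiniteness/approximation argument.

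For the hard direction, suppose $\mathcal R$ is a pmp countable Borel ergodic equivalence relation that is \emph{not} amenable. The plan is to invoke the Bowen--Hoff--Ioana generalization \cite{BHI18} of the Gaboriau--Lyons theorem: the non-amenable ergodic relation $\mathcal R$ contains (after passing to the relevant extension/restriction as in their statement) the orbit equivalence relation of an essentially free, measure-preserving action of the free group $F_2$. I would then chain together the two structural lemmas promised in the introduction: Lemma \ref{lem: iro group and equiv}, identifying the IRO-extension property of $F_2$ with that of the orbit relation of one of its free pmp actions, and Lemma \ref{lem: iro equiv hereditary}, asserting that the IRO-extension property passes to subrelations. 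By Theorem \ref{thm: free no iro}, $F_2$ has no IRO-extension property, so by Lemma \ref{lem: iro group and equiv} neither does the orbit relation $\mathcal S$ of its free action. Since $\mathcal S$ embeds as a subrelation of (an extension of) $\mathcal R$, Lemma \ref{lem: iro equiv hereditary} forces $\mathcal R$ itself to fail the IRO-extension property, contradicting the assumption and completing the contrapositive.

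The main obstacle I anticipate is the matching of hypotheses between \cite{BHI18} and the two lemmas: the Bowen--Hoff--Ioana theorem typically produces the free-group subrelation only inside a suitable \emph{extension} of $\mathcal R$ (e.g.\ after taking a product with a Bernoulli system or passing to an amplification), so I must verify that the IRO-extension property transfers correctly under such extensions. Concretely, if $\widetilde{\mathcal R} \to \mathcal R$ is a class-bijective (or measure-preserving fiber) extension, I would need a transfer lemma stating that $\mathcal R$ has the IRO-extension property if and only if $\widetilde{\mathcal R}$ does, so that a failure for the free-group subrelation of $\widetilde{\mathcal R}$ propagates down to $\mathcal R$. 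Pulling an IRO back along the factor map $\widetilde{\mathcal R}\to\mathcal R$ and pushing an extension forward should handle one direction cleanly; the delicate point is going the other way, ensuring that an invariant random extension upstairs descends, which is where I would spend the most care and where the ergodicity and measure-preservation hypotheses become essential.
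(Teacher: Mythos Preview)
Your plan is essentially the paper's proof: for the non-amenable direction it uses Bowen--Hoff--Ioana to produce a class-bijective extension containing an $F_2$-orbit subrelation, then chains Lemma~\ref{lem: iro group and equiv}, Lemma~\ref{lem: iro equiv hereditary}, and a transfer lemma for class-bijective extensions (Lemma~\ref{lem: equiv extesnion IRO}) exactly as you describe, and you have correctly flagged that extension-transfer step as the one new ingredient beyond the group case. For the amenable direction the paper takes a slight shortcut over your averaging plan: rather than redoing a F{\o}lner-type argument at the level of relations, it invokes the Dye/Connes--Feldman--Weiss classification (Theorem~\ref{thm: amenability}) to realize the ergodic amenable relation as the orbit relation of a free action of a countable amenable group, after which Lemma~\ref{lem: iro group and equiv} together with Theorem~\ref{thm: amenable gp has iro} finishes immediately.
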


Trivially, a non-ergodic equivalence relation has the IRO-extension property iff almost all its ergodic components have this property.

We have a large supply of groups and equivalence relations with invariant random orders that cannot be extended invariantly into total orders. We may define a maximal invariant random order, that is a partial invariant order that cannot be extended. Formally, let us define the extension order on the space of IRO's on the group $G$. We say that IRO $\nu_1$ is smaller than IRO $\nu_2$ in the extension order if there is a $G$-invariant Borel probability measure on $\pord(G) \times \pord(G)$ those marginals are $\nu_1$ and $\nu_2$ respectively and that is supported on the set of pairs of orders $(\prec_1, \prec_2)$ with $\prec_1$ being contained in $\prec_2$ (in other words, $x \prec_1 y$ implies $x \prec_2 y$ for all $x,y \in G$).
\begin{prop}
Every IRO on a countable group $G$ is smaller in the extension order than some extension-maximal IRO.
\end{prop}
\begin{proof}
We would like to apply Zorn's lemma, so we need to establish that every monotone chain has an upper bound. To this end we first show that every monotone chain has a finite subchain or a chain isomorphic to the first countable ordinal $\omega$. 
	
We define the following continuous function $f$ on $\pord(G)$:
	
\[
f(\prec) = \sum_{i \in \N} \frac{1}{2^i} \mathbb{I}_{x_i \prec y_i},
\]
for all $\prec \in \pord(G)$, where $(x_i,y_i)$ is some enumerations of all the pairs from $G \times G$, and $\mathbb{I}_{x_i \prec y_i} = 1$ iff $x_i \prec y_i$ and is $0$ otherwise. Observe that if $\nu_2$ is bigger than $\nu_1$ in the extension order then $\int f d\nu_1 \leq \int f d\nu_2$ and the equality is attained exactly when $\nu_1 = \nu_2$. 
Let $M$ be the supremum of $\int f d\nu$ where $\nu$ runs over all the elements of the chain under consideration. We simply take $\nu_1$ to be any element of the chain and recursively define $\nu_{i+1}$ to be an IRO that is not smaller in the extension order than $\nu_i$ and such that $\int f d\nu_{i+1} > M - 1/i$. It is easy to observe that $(\nu_i)$ either stabilizes or indeed forms a cofinal chain that is isomorphic to the $\omega$ ordinal. If it stabilizes, we trivially have the upper bound for the initial monotone chain, so assume we have a countable monotone in the extension order sequence $(\nu_i)$ of IRO's. By definition of the extension order, for every pair $(\nu_i, \nu_{i+1})$ there is an invariant measure $\xi_i$ on $\pord(G) \times \pord(G)$ with marginals $\nu_i$ and $\nu_{i+1}$ that is supported on the pairs $(\prec_1, \prec_2)$ such that $\prec_1 \subset \prec_2$ (we consider relations as subsets of $G \times G$). Now, repeatedly using the relatively-independent joining construction (see e.g. \cite[Example, p. 126]{Gl03}), we construct a countable joining, a $G$-invariant measure $\mu$ on $(\pord(G))^\N$ such that its marginals $\proj_i(\mu)$ are $\nu_i$ and that its pair-marginals $\proj_{i,i+1}(\mu) = \xi_i$. We conclude that $\mu$ almost-every sequence $(\prec_i) \in (\pord(G))^\N$ is monotone: $\prec_i \subset \prec_{i+1}$ for all $i \in \N$. Define a Borel function $u : (\pord(G))^\N \to 2 ^ {G \times G}$ that sends a sequence of orders to their union. Obviously, function $u$ applied to a monotone sequence results in a subset that is an order. It is now easy to check that $u(\mu)$ is a measure on $\pord(G)$ that is an upper bound in the extension order for all IRO's $\nu_i$.
\end{proof}

A similar proposition with a similar proof holds for IRO's on equivalence relations.

\begin{quest}
What can we say about the space of maximal invariant random orders on a group? What can be said about the L-action of the group on this set?
\end{quest}

{\em Acknowledgements.} 
I would like to thank the referee for useful remarks and suggestions.
The research was carried out in Euler Mathematical Institute at St. Petersburg State University, supported by Ministry of Science and Higher Education of the Russian Federation, agreement no. 075--15--2022--287.

\section{Preliminaries}

For a standard Borel space $X$ we denote $\meas(X)$ the space of all Borel probability measures.

By a pmp (probability measure preserving) action of a countable group $G$ on a standard probability space $(X, \mu)$ (or simply, a $G$-action) we mean a left action of $G$ by Borel automorphisms that preserve measure: $\mu(g^{-1}(A)) = \mu(A)$ for all measurable subsets $A$ of $X$ and all $g \in G$. We will denote $G \acts (X,\mu)$ the action described. For a topological action $G \acts X$, we denote $\invmeas(G \acts X)$ the space of all invariant Borel probability measures on $X$, or simply $\invmeas(X)$, if the action is implied, in particular $\invmeas(\pord(G))$ is the space of all IRO's on $G$.
Two pmp actions of $G$ on standard probability spaces are isomorphic iff there is a measure-preserving isomorphism that intertwines these actions.

By a joining of two pmp $G$-actions on $(X_1, \mu_1)$ and $(X_2, \mu_2)$ we mean a $G$-invariant measure $\nu$ on $X_1 \times X_2$ such that $\proj_{X_1} (\nu) = \mu_1$ and $\proj_{X_2}(\nu) = \mu_2$, where $\proj_{X_i}$ denotes the marginal-projection. Similarly, we can define a triple joining.
If $G \acts (X_i, \mu)$, $i = 1,2$ are pmp actions, we denote $\join(G \acts (X_1, \mu),\, G \acts (X_2, \mu))$ the space of all joinings of these two pmp actions. If $G \acts (X, \mu)$ is a pmp action and $G \acts Y$ is a topological action on a compact space, we denote $\join(G \acts (X, '\mu), G \acts Y)$ the space of all joinings of  $G \acts (X, \mu)$ with action $G \acts Y$ endowed with some Borel probability measure.

\section{The free group has no IRO-extension property}\label{sec: free}

In this section we will prove that a free group that factors onto $\gpSL_3(\Z)$ has no IRO-extension property. 
It would be nice to prove that the IRO-extension property passes to factor-groups, but I don't know how to do this. Instead, we manually lift the example presented in \cite{GLM22} to the factorized free group, tracing the proof from that paper. 

In this section we will work with the L-action on $\pord$ and will borrow heavily notation and statements from \cite{GLM22}. 

Let $G$ be a group and $\sqsubset$ be an order on $G$. 
For $a,b \in G$ denote:
\begin{equation*}
\begin{array}{cc}
\smlp(a,b) =&\left\lbrace \prec \in \ext(\sqsubset) \vert \,\exists q > 0 \, \forall n > 0 \quad a^{-q}b^n \prec e\right\rbrace \\
\smlm(a,b) =&\left\lbrace \prec \in \ext(\sqsubset) \vert \,\exists q > 0 \, \forall n > 0 \quad e \prec b^{-n}a^q \right\rbrace
\end{array}
\end{equation*} 

In what follows $\Gamma = \gpSL_3(\Z)$. Let $\sqsubset$ be the left-invariant order on $\Gamma$ that is generated by the semigroup consisting of matrices in the form $I + A$, where $I$ is the identity operator matrix and $A$ is a matrix with all non-negative entries and at least one strictly positive entry.
The following elements are positive in that order:

\begin{equation*}
\begin{array}{ccc}
a_1 = \begin{pmatrix}
	1 & 1 & 0 \\
	0 & 1 & 0 \\
	0 & 0 & 1
\end{pmatrix}&
a_2= \begin{pmatrix}
	1 & 0 & 1 \\
	0 & 1 & 0 \\
	0 & 0 & 1 
\end{pmatrix}& 
a_3 = \begin{pmatrix}
	1 & 1 & 0 \\
	0 & 1 & 0 \\
	0 & 0 & 1 
\end{pmatrix} \\
a_4 = \begin{pmatrix}
	1 & 0 & 0 \\
	1 & 1 & 0 \\
	0 & 0 & 1
\end{pmatrix}& 
a_5 = \begin{pmatrix}
	1 & 0 & 0 \\
	0 & 1 & 0 \\
	1 & 0 & 1 
\end{pmatrix}& 
a_6 = \begin{pmatrix}
	1 & 0 & 0 \\
	0 & 1 & 0 \\
	0 & 1 & 1 
\end{pmatrix}.
\end{array}
\end{equation*}
Note that $a_i$ and $a_{i+1}$ commute for $i = 1, \ldots, 6 \mod 6$ and generate a copy of $\Z^2$.

Denote $\smlm = \bigcap_{i = 1}^6 \smlm(a_i, a_{i-1})$ and $\smlp = \bigcap_{i = 1}^6 \smlp(a_i, a_{i+1})$.
\begin{lem}[\cite{GLM22}, Lemma 3.5]\label{lem: free_sl_extension_decomposition}
\[
\ext(\sqsubset) = \smlp \cup \smlm.
\]
\end{lem}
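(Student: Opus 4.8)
\emph{Plan.} Fix an arbitrary $\prec \in \ext(\sqsubset)$; since the roles of $+$ and $-$ are symmetric it suffices to prove that $\prec \notin \smlp$ forces $\prec \in \smlm$. The first move is to localize to the six copies of $\Z^2$. Put $H_i = \langle a_i, a_{i+1}\rangle \cong \Z^2$. Because $\sqsubset$ is left-invariant with positive cone the semigroup of entrywise non-negative matrices, its restriction to $H_i$ is the coordinatewise order whose positive cone is exactly $\{a_i^s a_{i+1}^t : s,t \ge 0\}$; in particular every element $a_i^{-q}a_{i+1}^n$ (with $q,n>0$) is $\sqsubset$-incomparable to $e$, so its sign is decided by $\prec$ alone. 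Unwinding the definitions, $\smlp(a_i,a_{i+1})$ asserts the existence of a full ``column'' $\{a_i^{-q}a_{i+1}^n : n>0\}$ lying $\prec$-below $e$, while $\smlm(a_{i+1},a_i)$ asserts a full ``row'' $\{a_i^{-n}a_{i+1}^q : n>0\}$ lying $\prec$-above $e$. I record at once that these two events are \emph{not} complementary for an isolated $\Z^2$: a total order induced by a linear functional $(s,t)\mapsto \alpha s + t$ with $\alpha>0$ irrational refines the coordinatewise order yet makes every column eventually positive and every row eventually negative, so both fail. Hence the lemma cannot be proved pair-by-pair and must use the ambient relations of $\Gamma$.

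The engine is the Steinberg commutator relation. A direct computation gives $[a_i,a_{i+2}] = a_{i+1}^{\varepsilon_i}$ with signs $\varepsilon_i\in\{\pm1\}$ that alternate around the cycle, and since $a_i$ commutes with $a_{i+1}$ and $a_{i+1}$ with $a_{i+2}$ this iterates to
\[
a_i^{q}\, a_{i+2}^{m}\, a_i^{-q} \;=\; a_{i+1}^{\varepsilon_i q m}\, a_{i+2}^{m} \qquad (q,m \ge 0).
\]
This identity is what couples neighbouring pairs: it realizes a large power of $a_{i+1}$ as the exact discrepancy between $a_{i+2}^{m}$ and its $a_i$-conjugate, and it is the bridge along which a domination statement is to be transported from the pair $(a_i,a_{i+1})$ to the pair $(a_{i+1},a_{i+2})$.

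The argument then reduces to two implications, both proved by combining the displayed identity with the facts that $\prec$ refines $\sqsubset$ (and hence respects all left $\Gamma$-translates of $\sqsubset$-positivity) and is transitive: \emph{(i) a seed}, that failure of $\smlp(a_i,a_{i+1})$ forces $\smlm(a_{i+1},a_i)$, and \emph{(ii) propagation}, that $\smlm(a_{i+1},a_i)$ forces $\smlm(a_{i+2},a_{i+1})$. The heuristic behind propagation is that once some fixed $a_{i+1}^{q}$ dominates all powers $a_i^{n}$, the conjugation identity makes the separating term $a_{i+1}^{\varepsilon_i q m}$ between $a_{i+2}^{m}$ and $a_i^{q}a_{i+2}^{m}a_i^{-q}$ $\prec$-enormous; comparing both sides with $e$ through $\sqsubset$-positive building blocks can only be consistent if a single power of $a_{i+2}$ already dominates every power of $a_{i+1}$, i.e. $\smlm(a_{i+2},a_{i+1})$ holds. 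Tracking the alternation of $\varepsilon_i$ keeps every conclusion of $\smlm$-type. Granting (i) and (ii), pick from $\prec\notin\smlp$ an index $i_0$ with $\smlp(a_{i_0},a_{i_0+1})$ false, apply (i) to seed $\smlm(a_{i_0+1},a_{i_0})$, and iterate (ii) around the $6$-cycle to obtain $\smlm(a_{j+1},a_j)$ for every $j$, which is exactly $\prec\in\smlm$.

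The main obstacle is the propagation step, and the crux of the difficulty is that $\prec$ is merely a total order, \emph{not} $\Gamma$-invariant, so no cancellation or translation is available: every comparison must be carried out between concrete group elements using only $\sqsubset$-forced signs together with transitivity of $\prec$. Converting the heuristic ``the discrepancy is too large to absorb'' into an honest choice of the witnessing exponent for $\smlm(a_{i+2},a_{i+1})$ as a function of the witness for $\smlm(a_{i+1},a_i)$ --- uniformly across the six steps and compatibly with the alternating signs $\varepsilon_i$ --- is where the genuine work lies, and it is precisely this book-keeping, made possible by the relations of $\gpSL_3(\Z)$, that has no analogue in a general group.
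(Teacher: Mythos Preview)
The paper does not itself prove this lemma: it is quoted verbatim from \cite{GLM22} (their Lemma~3.5) and used as a black box. So there is no ``paper's own proof'' here against which to compare your argument; your proposal must be judged on its own.

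Your outline correctly isolates the two genuinely nontrivial ingredients --- the Steinberg commutator relations linking consecutive triples $a_i,a_{i+1},a_{i+2}$, and the cyclic bookkeeping around the hexagon --- and you are right that the statement cannot be established one $\Z^2$ at a time. But the plan, as written, is not a proof: both implications (i) and (ii) are asserted and then explicitly \emph{granted}, and the decisive step (i) is in direct tension with your own counterexample. You show that for the invariant order on $\Z^2$ coming from an irrational linear functional, $\smlp(a_i,a_{i+1})$ and $\smlm(a_{i+1},a_i)$ fail simultaneously; hence ``failure of $\smlp(a_i,a_{i+1})$ forces $\smlm(a_{i+1},a_i)$'' is false as a local statement, and you give no indication of which global relation of $\Gamma$ is supposed to rescue it. The commutator identity you display only couples the pair $(a_i,a_{i+1})$ to the pair $(a_{i+1},a_{i+2})$; it says nothing that would, by itself, upgrade ``for every $q$ there is an $n$ with $e\prec a_i^{-q}a_{i+1}^n$'' to ``there is a $q$ with $e\prec a_i^{-n}a_{i+1}^q$ for every $n$'' inside the same pair. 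So the seed step, as you have formulated it, is a gap rather than a reduction.

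The propagation step (ii) is likewise only a heuristic in your write-up. Because $\prec$ is not $\Gamma$-invariant, the identity $a_i^{q}a_{i+2}^{m}a_i^{-q}=a_{i+1}^{\varepsilon_i qm}a_{i+2}^{m}$ does not allow you to ``cancel'' or ``translate'' inequalities; every comparison must be between explicit group elements already forced by $\sqsubset$. You say exactly this, but then stop short of producing the witnessing exponent for $\smlm(a_{i+2},a_{i+1})$ from that for $\smlm(a_{i+1},a_i)$. Until that computation is carried out --- and until the seed step is either proved or replaced by a different dichotomy that \emph{does} follow from the ambient relations --- what you have is a promising scaffold, not a proof. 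If you want to complete it, the natural move is to look at \cite{GLM22}, Lemma~3.5, where the actual chain of $\sqsubset$-forced inequalities is written out.
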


If $\prec$ is an order on a set $X$, and $\varphi : Y \to X$ is a map, we denote $\varphi^{Ord}(\prec)$ the order on $Y$ defined by $y_1 (\varphi^{Ord}(\prec)) y_2$ iff $\varphi(y_1) \prec \varphi(y_2)$, for every $y_1, y_2 \in Y$.

Let $F$ be a free group and let $\pi: F \to \Gamma$ be an epimorphism. A {\em transversal} is any map $\varphi$ from $\Gamma$ to $F$ such that $\pi \circ \phi$ is the identity map on $\Gamma$. We will need a transversal of a special kind. Let us fix any  $\repa_1, \ldots , \repa_6 \in F$ such that $\pi(\repa_i) = a_i$.
Note that elements $a_i$ and $a_{i+1}$ commute and generate a copy of $\Z^2$. We also have intersections $\langle a_i, a_{i+1} \rangle \cap \langle a_{i+1}, a_{i+2}\rangle = \langle a_{i+1}\rangle$, for $i = 1, \ldots, 6 \mod 6$, and $\langle a_i, a_{i+1}\rangle \cap \langle a_j, a_{j+1}\rangle = e$ if $i-j \notin \{-1, 0, 1\} \mod 6$.  
Thus we may define $\varphi(a_i^n a_{i+1}^m) = \repa_i^n \repa_{i+1}^m$, for $i = 1, \ldots, 6 \mod 6$, and $n,m \in \Z$ (we will refer to this as a {\em pseudo-homomorphism property}); we define $\varphi$ on remaining elements of $\Gamma$ arbitrarily in such a way that it becomes a transversal. 
Denote $\bsqsub = \pi^{Ord}(\sqsubset)$.
We will need the following technical lemma:

\begin{lem}\label{lem: free_lifting_order_properties}
Let $\prec \in \ext(\bsqsub)$. The following hold:
\begin{enumerate}
\item $\varphi^{Ord}(\prec) \in \ext(\sqsubset)$;
\item $\prec \in \smlbm(\repa_i, \repa_{i-1})$ iff $\varphi^{Ord}(\prec) \in \smlm(a_i, a_{i-1})$, for $i=1, \ldots, 6 \mod 6$;
\item $\prec \in \smlbp(\repa_i, \repa_{i+1})$ iff $\varphi^{Ord}(\prec) \in \smlp(a_i, a_{i+1})$, for $i=1, \ldots, 6 \mod 6$.
\end{enumerate}
\end{lem}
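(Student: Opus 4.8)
The plan is to verify all three items by unwinding the definitions; the only structural inputs are that the transversal $\varphi$ is injective and that, by construction, it is multiplicative on each two-generator subgroup $\langle a_j, a_{j+1}\rangle$. I would dispatch (1) first and then read off (2) and (3) directly from the explicit form of $\varphi$.

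For (1), I would first note that $\varphi$ is injective, since $\pi \circ \varphi = \mathrm{id}_\Gamma$. Consequently the pullback $\varphi^*(\prec)$ of the total order $\prec$ is again a total order on $\Gamma$: transitivity, antireflexivity and antisymmetry pass through $\varphi$ formally, while totality uses injectivity, as distinct elements of $\Gamma$ have distinct (hence $\prec$-comparable) images. To see that $\varphi^*(\prec)$ extends $\sqsubset$, I would take $x_1 \sqsubset x_2$; because $\pi(\varphi(x_j)) = x_j$, this is literally the statement $\pi(\varphi(x_1)) \sqsubset \pi(\varphi(x_2))$, i.e. $\varphi(x_1)\,\bsqsub\,\varphi(x_2)$ by definition of $\bsqsub = \pi^*(\sqsubset)$. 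As $\prec \in \ext(\bsqsub)$, this yields $\varphi(x_1) \prec \varphi(x_2)$, which is exactly $x_1\,\varphi^*(\prec)\,x_2$.

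For (2) and (3), I would simply transport the defining inequalities through $\varphi$. The group elements occurring in $\smlm(a_i,a_{i-1})$ and $\smlp(a_i,a_{i+1})$ are $a_{i-1}^{-n}a_i^{q}$ and $a_i^{-q}a_{i+1}^{n}$; since consecutive generators commute, each lies in the copy of $\Z^2$ generated by a consecutive pair, which is precisely where $\varphi$ was prescribed to be multiplicative. Hence $\varphi(a_{i-1}^{-n}a_i^{q}) = \repa_{i-1}^{-n}\repa_i^{q}$, $\varphi(a_i^{-q}a_{i+1}^{n}) = \repa_i^{-q}\repa_{i+1}^{n}$, and $\varphi(e) = e$. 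Plugging these into the definition of $\varphi^*(\prec)$ turns $e\,\varphi^*(\prec)\,a_{i-1}^{-n}a_i^{q}$ into $e \prec \repa_{i-1}^{-n}\repa_i^{q}$, and $a_i^{-q}a_{i+1}^{n}\,\varphi^*(\prec)\,e$ into $\repa_i^{-q}\repa_{i+1}^{n} \prec e$. Keeping the quantifiers $\exists q>0\,\forall n>0$ intact then gives the two equivalences.

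There is no genuine obstacle, as the statement is essentially definitional; the one point I would keep honest is that the elements appearing in the conditions defining $\smlm$ and $\smlp$ really fall within the domain on which $\varphi$ is multiplicative, which is exactly the assertion that $\{a_{i-1},a_i\}$ and $\{a_i,a_{i+1}\}$ are consecutive (hence commuting) pairs under the cyclic indexing $i = 1,\dots,6 \bmod 6$. Granting this, (2) and (3) are immediate, and (1) needs only the injectivity of $\varphi$ together with $\prec \in \ext(\bsqsub)$.
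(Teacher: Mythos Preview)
Your proposal is correct and is exactly the verification the paper has in mind: the paper declares the lemma ``trivial to check'' and gives no argument, and your unwinding of the definitions---using injectivity of the transversal for (1) and the explicit formula $\varphi(a_i^{\,n}a_{i+1}^{\,m})=\repa_i^{\,n}\repa_{i+1}^{\,m}$ (together with $\varphi(e)=e$) for (2) and (3)---is precisely that check.
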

\begin{proof}
First note that the map $\varphi$ is a bijection between $\Gamma$ and the image of $\varphi$ such that $\pi \circ \varphi = Id$. Also note that  $\varphi$ defines an isomorphism between the restriction of $\bsqsub$ to the image of $\varphi$ and $\sqsubset$, i.e. $\varphi^{Ord}(\bsqsub) = \sqsubset$.

For (1) we use that the restiction of $\prec$ to the image of $\varphi$ is a total order, that extends $\bsqsub$, so $\varphi^{Ord}(\prec)$ is trivially a total order on $\Gamma$ that extends $\sqsubset$.

For (2) we again use that $\varphi$ is a bijection, but we also need its pseudo-homomorphism property we get by construction:
\begin{equation}\label{eq: local homomorphism}
\varphi(a_i^n a_{i+1}^m) = \alpha_i^n \alpha_{i+1}^m
\end{equation}
and 
\begin{equation*}
\pi(\alpha_i^n \alpha_{i+1}^m) = a_i^n a_{i+1}^m, 
\end{equation*}
for all $i = 1, \ldots, 6 \mod 6$ and all $n,m \in \Z$.
The statement $$\prec \in \smlbm(\alpha_i, \alpha_{i-1}),$$ by definition, means that $\prec$ is a linear extension of order $\bsqsub$ on $\Gamma$ and there is $q > 0$ such that for every $n>0$ we have $e \prec \alpha_{i-1}^{-n}\alpha_i^q$.  The latter is equivalent to $\varphi(e) \prec  \varphi(a_{i-1})^{-n}\varphi(a_i)^q$. Now we use the pseudo-homomorphism property \ref{eq: local homomorphism} to see that the latter is equivalent to  $\varphi(e) \prec  \varphi(a_{i-1}^{-n} a_i^q)$, which is equivalent, by definition, to $e (\varphi^{Ord}(\prec))  a_{i-1}^{-n} a_i^q$, so we get the needed equivalence. It is crucial that the order of multiplication is right so we could use the pseudo-homomorphism property.

The last point, (3), is proved similarly, we only need to  carefully observe that the pseudo-homorphism property is applicable as well.
\end{proof}

Denote 
$\bsmlm = \bigcap_{i=1}^6 \smlbm(\repa_i, \repa_{i-1})$ and 
$\bsmlp = \bigcap_{i=1}^6 \smlbp(\repa_i, \repa_{i+1})$.
\begin{lem}\label{lem: free extension decomposition}
\[
\ext(\bsqsub) = \bsmlm \cup \bsmlp. 
\]
\end{lem}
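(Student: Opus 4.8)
The plan is to assemble this directly from the two preceding lemmas; the genuine content has already been packaged into Lemma~\ref{lem: free_lifting_order_properties}, so what remains is bookkeeping over the six indices.

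First I would dispose of the easy inclusion $\bsmlm \cup \bsmlp \subseteq \ext(\bsqsub)$. This is immediate from the definitions: each $\smlbm(\repa_i, \repa_{i-1})$ and each $\smlbp(\repa_i, \repa_{i+1})$ is by definition a subset of $\ext(\bsqsub)$, and $\bsmlm$, $\bsmlp$ are merely intersections of six such sets.

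For the reverse inclusion, fix $\prec \in \ext(\bsqsub)$. By Lemma~\ref{lem: free_lifting_order_properties}(1) we have $\varphi^*(\prec) \in \ext(\sqsubset)$, so Lemma~\ref{lem: free_sl_extension_decomposition} applies and gives $\varphi^*(\prec) \in \smlp$ or $\varphi^*(\prec) \in \smlm$. Suppose first $\varphi^*(\prec) \in \smlm = \bigcap_{i=1}^6 \smlm(a_i, a_{i-1})$; then $\varphi^*(\prec) \in \smlm(a_i, a_{i-1})$ for every $i$, and part (2) of Lemma~\ref{lem: free_lifting_order_properties} converts each such membership into $\prec \in \smlbm(\repa_i, \repa_{i-1})$, whence $\prec \in \bsmlm$. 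The alternative $\varphi^*(\prec) \in \smlp$ is handled symmetrically using part (3), yielding $\prec \in \bsmlp$. In either case $\prec \in \bsmlm \cup \bsmlp$, which closes the argument.

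The only thing to watch is that the index correspondence be honored uniformly: the ``for all $i$'' hidden in the definitions of $\smlm$ and $\smlp$ must be transported intact through the equivalences of Lemma~\ref{lem: free_lifting_order_properties}, which is exactly why those equivalences were stated index-by-index rather than as a single blanket statement. There is no real obstacle here---once Lemma~\ref{lem: free_lifting_order_properties} is in hand the decomposition is a formal consequence---so I expect the proof to be a short assembly rather than a computation.
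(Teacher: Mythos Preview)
Your proposal is correct and follows essentially the same route as the paper's own proof: dispose of the trivial inclusion, then for $\prec \in \ext(\bsqsub)$ apply Lemma~\ref{lem: free_lifting_order_properties}(1) to pass to $\varphi^*(\prec) \in \ext(\sqsubset)$, invoke Lemma~\ref{lem: free_sl_extension_decomposition}, and pull the dichotomy back via parts (2) and (3). The paper merely compresses the index-by-index transfer into the phrase ``using the previous lemma again,'' whereas you spell it out, but there is no substantive difference.
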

\begin{proof}
It is trivial that the right-hand side is contained in the left-hand side. For the other direction, let $\prec \in \ext(\bsqsub)$. By the previous lemma this implies that $\varphi^{Ord}(\prec) \in \ext(\sqsubset)$, so by Lemma \ref{lem: free_sl_extension_decomposition}, $\varphi^{Ord}(\prec) \in \smlm \cup \smlp$. Using previous lemma again, we get that $\prec \in \bsmlm \cup \bsmlp$.
\end{proof}

Let the group $F$ be acting by Borel automorphisms on a standard Borel space $X$. A Borel subset $A$ of $X$ is called {\em wandering} if there is a $g \in F$ such that sets $g^n A$ are pairwise disjoint for $n \in \Z$. Denote $\wand$ the collection of all countable unions of wandering sets.
\begin{lem}[\cite{GLM22}, Lemma 3.6]\label{lem:free wandering properties}
Class $\wand$ has the following properties.
\begin{enumerate}
\item $\wand$ is closed under taking countable unions and measurable subsets;
\item any set from $\wand$ does not support an $F$-invariant measure.
\end{enumerate}
\end{lem}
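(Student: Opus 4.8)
The plan is to reduce both claims to statements about a single wandering set, since $\wand$ is by definition the collection of countable unions of wandering sets. So I would first record the elementary stability of the class of wandering sets itself, and then assemble the two properties by countable-union bookkeeping.

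For the first property, closure under countable unions is immediate: a countable union of members of $\wand$ is a countable union of wandering sets, hence again a member of $\wand$. For closure under measurable subsets, the key observation is that the witnessing group element can be reused. If $A$ is wandering with witness $g \in F$, meaning the sets $g^n A$ are pairwise disjoint for $n \in \Z$, and $B \subseteq A$ is measurable, then $g^n B \subseteq g^n A$, so the sets $g^n B$ are a fortiori pairwise disjoint; thus $B$ is wandering with the same witness $g$. For a general $W = \bigcup_k A_k \in \wand$ with each $A_k$ wandering, and a measurable $B \subseteq W$, I would write $B = \bigcup_k (B \cap A_k)$ and note each $B \cap A_k$ is a measurable subset of a wandering set, hence wandering, so $B \in \wand$.

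For the second property, I would argue contrapositively at the level of a single wandering set. Suppose $\mu$ is an $F$-invariant probability measure and $A$ is wandering with witness $g$. By $F$-invariance, $\mu(g^n A) = \mu(A)$ for every $n \in \Z$, while pairwise disjointness gives
\[
1 \geq \mu\Bigl(\bigcup_{n \in \Z} g^n A\Bigr) = \sum_{n \in \Z} \mu(g^n A) = \sum_{n \in \Z} \mu(A).
\]
A countably infinite sum of a fixed value is finite only if that value is zero, so $\mu(A) = 0$. Now for $W = \bigcup_k A_k \in \wand$, subadditivity yields $\mu(W) \leq \sum_k \mu(A_k) = 0$. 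Hence every $F$-invariant probability measure assigns $W$ measure zero, so no such measure can be concentrated on $W$; that is, $W$ supports no $F$-invariant measure.

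There is no substantial obstacle here — this is a routine recurrence/pigeonhole argument — but the point requiring mild care is that the same witness $g$ must work for subsets (which it does, by inclusion) so that the single-set arguments glue correctly across the countable union. I would make sure to state that $\mu$ is assumed to be a probability (finite) measure, since it is exactly finiteness that forces $\mu(A)=0$ in the displayed estimate.
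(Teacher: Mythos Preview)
Your proof is correct and is exactly the standard elementary argument. The paper does not supply its own proof of this lemma; it simply cites \cite{GLM22}, Lemma~3.6, so there is nothing substantive to compare against. Your reduction to a single wandering set via the same witness $g$, followed by the pigeonhole/countable-additivity estimate for part~(2), is precisely what one would expect any proof of this statement to contain.
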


\begin{lem}[\cite{GLM22}, Lemma 3.7]\label{lem:free wandering criterion}
Let $A$ be a measurable subset of $X$. Let $g_1, \ldots , g_k$ be a $k$-tuple of elements from $F$. If there is $n > 0$ such that for any $k$-tuple $n_1, \ldots, n_k > n$ we have $\bigcap_{i=1}^k g_i^{n_i} A = \varnothing$, then $A \in \wand$.
\end{lem}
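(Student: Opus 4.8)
The plan is to convert the combinatorial hypothesis into a pointwise statement, use it to cover $X$ by finitely many Borel pieces, and show that the trace of $A$ on each piece is a single wandering set.

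First I would reformulate the hypothesis pointwise. For a point $x \in X$ and an index $i$, set $S_i(x) = \{m \in \Z : g_i^{-m}x \in A\}$, so that $x \in g_i^{m}A$ exactly when $m \in S_i(x)$. The assumption $\bigcap_{i=1}^k g_i^{n_i}A = \varnothing$ for all $n_1,\dots,n_k > n$ says precisely that no point can lie in $g_i^{n_i}A$ for every $i$ simultaneously with all exponents exceeding $n$; equivalently, for every $x$ at least one index $i$ has $S_i(x) \cap (n,\infty) = \varnothing$. Accordingly I define
\[
X_i = \{x \in X : g_i^{-m}x \notin A \text{ for all } m > n\} = \bigcap_{m>n} g_i^{m}(X \setminus A),
\]
which is Borel since each $g_i^m$ is a Borel automorphism and $A$ is measurable. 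The reformulation gives the cover $X = \bigcup_{i=1}^k X_i$, and hence $A = \bigcup_{i=1}^k (A \cap X_i)$.

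Since $\wand$ is closed under finite unions (Lemma \ref{lem:free wandering properties}), it suffices to prove $A \cap X_i \in \wand$ for each $i$. Write $B = A \cap X_i$. The defining property of $X_i$ forces $B \cap g_i^{m}B = \varnothing$ for every $m > n$: if $y$ lay in the intersection, then $y \in B \subseteq A$ while $g_i^{-m}y \in B \subseteq A$, contradicting $y \in X_i$. This relation is symmetric in $m$ (apply $g_i^{-m}$), so $B \cap g_i^{m}B = \varnothing$ whenever $|m| > n$. Consequently, putting $h = g_i^{n+1}$, for every nonzero $\ell$ we have $|(n+1)\ell| > n$, so $B \cap h^{\ell}B = \varnothing$; that is, the translates $h^{\ell}B$ are pairwise disjoint and $B$ is a wandering set. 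If $g_i = e$ then $X_i = X \setminus A$ and $B = \varnothing$, so this degenerate case is vacuous; for $g_i \neq e$ we have $h = g_i^{n+1} \neq e$ in the free group $F$, as required. This places each $B$ in $\wand$ and finishes the argument.

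The step I expect to require the most care is the pointwise reformulation together with the verification that the resulting pieces are genuinely wandering: the content of the lemma lies entirely in recognizing that \emph{some coordinate's hitting times are bounded above by $n$} is exactly the right way to split the point set, and that a single power $g_i^{n+1}$ then witnesses wandering. Everything else — measurability of the $X_i$, the closure property of $\wand$, and the case $g_i = e$ — is routine bookkeeping.
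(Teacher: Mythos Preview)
Your proof is correct. The paper does not supply its own proof of this lemma---it simply cites \cite{GLM22}, Lemma 3.7---so there is nothing in the paper to compare against; your argument is the standard one: reformulate the hypothesis as ``for every $x$ some $S_i(x)$ is bounded above by $n$'', use this to write $A$ as a finite union of the pieces $A\cap X_i$, and observe that each piece is wandering under $g_i^{n+1}$ (with the degenerate case $g_i=e$ forcing the piece to be empty).
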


\begin{lem}\label{lem: free bsmlp in wand}
$\bsmlp \in \wand$.
\end{lem}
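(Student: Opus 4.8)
The plan is to apply the wandering criterion (Lemma~\ref{lem:free wandering criterion}) after slicing $\bsmlp$ along the existential quantifier hidden in each $\smlbp$. For $\mathbf{q}=(q_1,\ldots,q_6)\in\Z_{>0}^6$ set
\[
\bsmlp_{\mathbf{q}}=\left\{\prec\in\ext(\bsqsub)\ \middle|\ \repa_i^{-q_i}\repa_{i+1}^m\prec e\ \text{ for all } i \text{ and all } m>0\right\}.
\]
Unwinding the definition of $\smlbp(\repa_i,\repa_{i+1})$ gives $\bsmlp=\bigcup_{\mathbf{q}}\bsmlp_{\mathbf{q}}$, a countable union; since $\wand$ is closed under countable unions (Lemma~\ref{lem:free wandering properties}), it suffices to show $\bsmlp_{\mathbf{q}}\in\wand$ for each fixed $\mathbf{q}$. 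For this I would invoke Lemma~\ref{lem:free wandering criterion} with the six elements $g_i=\repa_i$ and the threshold $n_0=\max_i q_i$, proving that $\bigcap_{i=1}^6\repa_i^{N_i}\bsmlp_{\mathbf{q}}=\varnothing$ whenever all $N_i>n_0$.

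To see the intersection is empty, suppose $\prec$ lies in it. Membership $\repa_i^{-N_i}\prec\in\bsmlp_{\mathbf{q}}$ unwinds, through the L-action, to the inequalities $\repa_i^{N_i}\repa_k^{-q_k}\repa_{k+1}^m\prec\repa_i^{N_i}$ for all $k,m$; taking $k=i$ and using that powers of a single $\repa_i$ commute, this yields
\[
\repa_i^{N_i-q_i}\repa_{i+1}^m\prec\repa_i^{N_i}\qquad(m>0).
\]
Next I would compare $\repa_{i+1}^{N_{i+1}}$ with $\repa_i^{N_i-q_i}\repa_{i+1}^{N_{i+1}}$ directly in $\bsqsub$. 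By definition $x\bsqsub y$ iff $\pi(x)\sqsubset\pi(y)$, and the $\pi$-images here are $a_{i+1}^{N_{i+1}}$ and $a_i^{N_i-q_i}a_{i+1}^{N_{i+1}}$; since $a_i,a_{i+1}$ commute their quotient is $a_i^{N_i-q_i}$, a product of positive generators because $N_i>q_i$. Hence $\repa_{i+1}^{N_{i+1}}\bsqsub\repa_i^{N_i-q_i}\repa_{i+1}^{N_{i+1}}$, so $\prec$, which extends $\bsqsub$, puts $\repa_{i+1}^{N_{i+1}}$ below $\repa_i^{N_i-q_i}\repa_{i+1}^{N_{i+1}}$. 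Combining with the displayed inequality at $m=N_{i+1}$ and transitivity gives $\repa_{i+1}^{N_{i+1}}\prec\repa_i^{N_i}$ for every $i$ (indices mod $6$, so $\repa_7=\repa_1$). Chaining these six relations around the cycle produces $\repa_1^{N_1}\prec\repa_6^{N_6}\prec\cdots\prec\repa_2^{N_2}\prec\repa_1^{N_1}$, the desired contradiction.

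The one delicate point---and the reason the argument must be run by hand in $F$ rather than quoted from \cite{GLM22}---is that the lifts $\repa_i,\repa_{i+1}$ do not commute in the free group, whereas the simplification of the conjugate $\repa_{i+1}^{-N_{i+1}}\repa_i^{N_i-q_i}\repa_{i+1}^{N_{i+1}}$ that drives the $\gpSL_3(\Z)$ argument relies on $a_ia_{i+1}=a_{i+1}a_i$. The resolution is that this step is only ever needed as a $\bsqsub$-comparison, and such comparisons are decided entirely by the $\pi$-images; thus every rewriting that requires commutativity may be carried out after applying $\pi$, where the relation genuinely holds, and the conclusion transported back along the inclusion $\bsqsub\subseteq{\prec}$. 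I expect the main thing to get right to be exactly this bookkeeping: verifying that each comparison used in the chain is a bona fide image-level ($\bsqsub$-) comparison and never a spurious word identity in $F$. The combinatorial skeleton itself is precisely the cyclic domination mechanism underlying Lemma~\ref{lem: free_sl_extension_decomposition}.
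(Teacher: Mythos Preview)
Your argument is correct and follows essentially the same template as the paper's: slice $\bsmlp$ along the existential quantifier, then use the wandering criterion by producing a cyclic chain of $\prec$-inequalities obtained from one translate-step plus one $\bsqsub$-comparison (where commutativity of $a_i,a_{i+1}$ is used only after applying $\pi$). The only cosmetic differences are that the paper collapses your vector $\mathbf q$ to a single scalar via the monotonicity $\bar A(q,i)\subset\bar A(q',i)$, and it translates by negative powers $\repa_{i+1}^{-n_i}$ (yielding the cycle $\repa_i^{-(q+1)}\prec\repa_{i+1}^{-(q+1)}$) rather than your positive powers $\repa_i^{N_i}$ (yielding $\repa_{i+1}^{N_{i+1}}\prec\repa_i^{N_i}$); neither variation affects the substance.
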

\begin{proof}
Let $\bar{A}(q, i) = \lbrace \prec \in \ext(\bsqsub) \vert\, \repa_i^{-q} \repa_{i+1}^n \prec e \, \forall n>0\rbrace$, for $i = 1, \ldots, 6 \mod 6$ and $q > 0$. Note that $\bar{A}(q, i) \subset \bar{A}(q',i)$, for $q < q'$ since 
\begin{equation*}
\pi(\repa_i^{-q'} \repa_{i+1}^n) = a_i^{-q'} a_{i+1}^n \sqsubset a_i^{-q}a_{i+1}^n = \pi(\repa_i^{-q} \repa_{i+1}^n).
\end{equation*}
The latter implies that 
\begin{equation*}
\bsmlp = \bigcup_{q > 0} \bigcap_{i = 1}^6 \bar{A}(q,i).
\end{equation*}
It is enough to show by Lemma \ref{lem:free wandering properties} that $\bigcap_{i = 1}^6 \bar{A}(q,i) \in \wand$. For the latter we will use Lemma \ref{lem:free wandering criterion}. We will prove that 
for $n_1, \ldots, n_6 > q + 1$ we have $\bigcap_{i = 1}^6 \repa_{i+1}^{-n_i}\bar{A}(q,i) = \varnothing$. Let $n > q+1$ and $\prec \in \repa_{i+1}^{-n}\bar{A}(q,i)$. We get $\repa_{i+1}^{-n} \repa_i^{-q} \repa_{i+1}^n \prec \repa_{i+1}^{-n}$, so
\[
	\repa_i^{-(q+1)} \prec \repa_{i+1}^{-n} \repa_i^{-q} \repa_{i+1}^n \prec \repa_{i+1}^{-n} \prec \repa_{i+1}^{-(q+1)},
\]
in the first inequality we used that 
\[\pi(\repa_i^{-(q+1)}) = a_i^{-(q+1)} \sqsubset a_i^{-q} = \pi(\repa_i^{-q}) = \pi(\repa_{i+1}^{-n} \repa_i^{-q} \repa_{i+1}^n)
\] 
and $\prec \in \ext(\sqsubset)$. So for any $\prec \in \bigcap_{i = 1}^6 \repa_{i+1}^{-n_i}\bar{A}(q,i)$ with $n_1, \ldots, n_6 > q + 1$ we get 
\[
\repa_1^{-(q+1)} \prec \repa_2^{-(q+1)} \prec \ldots \repa_6^{-(q+1)} \prec \repa_1^{-(q+1)},
\]
a contradiction.
\end{proof}

\begin{lem}\label{lem: free bsmlm in wand}
$\bsmlm \in \wand$.
\end{lem}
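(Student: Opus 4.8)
The plan is to prove that $\bsmlm \in \wand$ by mimicking the proof of the previous lemma, exploiting the manifest symmetry between the ``$+$'' and ``$-$'' constructions. Recall that $\smlbm(\repa_i, \repa_{i-1})$ is defined by the existence of some $q>0$ with $e \prec \repa_{i-1}^{-n}\repa_i^q$ for all $n>0$ (this is the lift of the condition $e \prec b^{-n}a^q$ appearing in $\smlm(a,b)$ under $\pi$ and the transversal $\varphi$). Accordingly I would set $\bar{B}(q,i) = \left\lbrace \prec \in \ext(\bsqsub) \,\middle|\, e \prec \repa_{i-1}^{-n}\repa_i^q \ \forall n>0 \right\rbrace$ for $i = 1,\ldots,6 \bmod 6$ and $q>0$, and first check the monotonicity $\bar{B}(q,i) \subset \bar{B}(q',i)$ for $q<q'$. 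This uses that $\pi(\repa_{i-1}^{-n}\repa_i^{q}) = a_{i-1}^{-n}a_i^{q} \sqsubset a_{i-1}^{-n}a_i^{q'} = \pi(\repa_{i-1}^{-n}\repa_i^{q'})$ together with $\prec \in \ext(\sqsubset)$, exactly parallel to the displayed computation in the proof of Lemma \ref{lem: free bsmlp in wand}. This monotonicity yields the representation $\bsmlm = \bigcup_{q>0} \bigcap_{i=1}^6 \bar{B}(q,i)$, so by Lemma \ref{lem:free wandering properties}(1) it suffices to show $\bigcap_{i=1}^6 \bar{B}(q,i) \in \wand$ for each fixed $q$.

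Next I would apply the wandering criterion, Lemma \ref{lem:free wandering criterion}, to $A = \bigcap_{i=1}^6 \bar{B}(q,i)$ with the $6$-tuple of group elements $\repa_{i-1}$ (the analogue of the $\repa_{i+1}$ used before). The goal is to produce a cyclic chain of strict inequalities that closes up into a contradiction. Concretely, I expect to show that for $n_1,\ldots,n_6 > q+1$ one has $\bigcap_{i=1}^6 \repa_{i-1}^{n_i}\bar{B}(q,i) = \varnothing$; note the translating exponents should be positive here (rather than negative as in the ``$+$'' case) because the defining inequality of $\bar{B}$ already carries the negative power $\repa_{i-1}^{-n}$, and applying the group element $\repa_{i-1}^{n}$ on the appropriate side is what lands each comparison back near $e$. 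For a single $i$ and $\prec \in \repa_{i-1}^{n}\bar{B}(q,i)$ with $n>q+1$, I would translate the defining inequality $e \prec \repa_{i-1}^{-n}\repa_i^{q}$ by $\repa_{i-1}^{n}$ to get $\repa_{i-1}^{n} \prec \repa_i^{q}$, and then squeeze this between nearby powers using $\ext(\sqsubset)$ exactly as before, obtaining a comparison of the form $\repa_{i-1}^{q+1} \prec \repa_i^{q+1}$ (up to the precise placement of exponents, which I would fix by tracking the $\pi$-images).

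Chaining these six comparisons around the cycle $i = 1,\ldots,6 \bmod 6$ then gives
\[
\repa_6^{q+1} \prec \repa_1^{q+1} \prec \repa_2^{q+1} \prec \ldots \prec \repa_6^{q+1},
\]
(or the reverse orientation, depending on indexing conventions), which contradicts antisymmetry and antireflexivity of $\prec$. Hence the intersection is empty, Lemma \ref{lem:free wandering criterion} applies, and $\bigcap_{i=1}^6 \bar{B}(q,i) \in \wand$, completing the proof.

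The step I expect to be the main obstacle is purely bookkeeping rather than conceptual: getting the exact exponents and the side of multiplication right so that the six squeezed inequalities genuinely form a single cyclic chain. The ``$-$'' case breaks the left-right symmetry of the ``$+$'' case (the witness inequality $e \prec \repa_{i-1}^{-n}\repa_i^{q}$ is oriented oppositely to $\repa_i^{-q}\repa_{i+1}^{n} \prec e$), so I would need to be careful to choose the translating elements and the direction of the final chain consistently, and to double-check that every intermediate strict inequality is justified by the order-extension property applied to an honest $\sqsubset$-comparison of $\pi$-images. Once the indices are aligned correctly, the argument is a verbatim transcription of Lemma \ref{lem: free bsmlp in wand}.
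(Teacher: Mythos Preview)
Your proposal is correct and follows the same approach as the paper: define $\bar B(q,i)$, use monotonicity in $q$ to write $\bsmlm$ as a countable union, then apply Lemma \ref{lem:free wandering criterion} with translating elements $\repa_{i-1}^{n_i}$ to produce a cyclic chain of strict inequalities. The paper's version is marginally tighter---it takes the threshold $n_i>q$ and closes the chain at exponent $q$ via $\repa_{i-1}^{q}\prec\repa_{i-1}^{n_i}\prec\repa_i^{q}$, skipping your extra squeeze step to exponent $q+1$---but this is cosmetic and your version with $n_i>q+1$ works just as well.
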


\begin{proof}
The proof is analogous to the previous but for minor details.

Denote $\bar{B}(q, i) = \lbrace \prec \in \ext(\bsqsub)\vert\, e \prec \repa_{i-1}^{-n} \repa_i^q \quad \forall n > 0\rbrace$. As in the previous proof, it is enough to show that $\bigcap_{i=1}^6\repa_{i-1}^{n_i} \bar{B}(q,i) = \varnothing$, for $n_1, \ldots, n_6 > q$. Consider $\prec \in \repa_{i-1}^n \bar{B}(q,i)$. Observe that $\repa_{i-1}^n \prec \repa_i^q$, for $n > 0$. So for $\prec \in \bigcap_{i=1}^6\repa_{i-1}^{n_i} \bar{B}(q,i)$ with $n_1, \ldots, n_6 > q$ we obtain 
\[
\repa_1^q \prec \repa_2^q \prec \ldots \prec \repa_6^q \prec \repa_1^q,
\]
a contradiction.
\end{proof}

We are ready to prove Theorem \ref{thm: free no iro}. stating that the free group $F_2$ (in fact, any group that factors onto $\gpSL_3(\Z)$) has no IRO-extension property.

\begin{proof}
Let $F$ be the free group on two generators. 
Note that $\ext(\bsqsub) = \bsmlp \cup \bsmlm$ (by Lemma \ref{lem: free extension decomposition}) so $\ext(\bsqsub) \in \wand$ (by Lemmata \ref{lem: free bsmlm in wand}, \ref{lem: free bsmlp in wand} and Lemma \ref{lem:free wandering properties}(1)).  
Consider the invariant random partial order given by the $\delta$-measure on $\bsqsub$. 
Note that an extension of the latter random order will give us an invariant measure on $\ext(\bsqsub)$ which is impossible by Lemma \ref{lem:free wandering properties}(2).
\end{proof}

\section{Preliminaries on ergodic theory of countable measurable equivalence relations}

Before proceeding with the proof of the main result we need to remind some standard facts and notions from ergodic theory, especially regarding equivalence relations, measure decomposition and invariance of measures.

A {\em measurable space} is a set endowed with a $\sigma$-algebra. A map between measurable spaces is {\em Borel} if preimages of all measurable sets are measurable. An isomorphism of measurable spaces is bijection that is Borel together with its inverse. 
A {\em Polish space} is a topological space whose topology comes from a complete separable metric structure. 
A {\em standard Borel space} is a measurable space that is measurably isomorphic to a Polish space endowed with the Borel $\sigma$-algebra. We refer the reader to \cite{Ke95} for the introduction to the subject.
Standard Borel spaces are completely classified up to isomorphism by their cardinality: there are all finite, a countable and the unique space of continuum cardinality, see \cite[Theorem 15.6, p. 90]{Ke95}. 
A Borel subset of a standard Borel space endowed with the induced $\sigma$-algebra is a standard Borel space itself \cite[Corolary 13.4, p. 82]{Ke95}.
From now on all measurable spaces are standard Borel.

An image of a Borel set under a Borel map is not always a Borel set (see \cite[Theorem 14.2, p. 85]{Ke95}),
On the other hand,

\begin{lem}\cite[Corollary 15.2 p. 89]{Ke95}\label{lem: injective Borel}
Let $f$ be a Borel map between standard Borel spaces $X$ and $Y$. If $f$ is injective on a Borel subset $A$ of $X$ then $f(A)$ is a Borel subset of $Y$.
\end{lem}

\begin{lem}\cite[Theorem 14.12, p.88]{Ke95}\label{lem: borel map graph}
Let $X$, $Y$ be standard Borel spaces. A partial function from $X$ to $Y$ has a Borel graph iff it is a Borel function and its domain is a Borel subset of $X$.
\end{lem}

\begin{lem}[Lusin-Novikov uniformization, see Theorem 18.10 p. 123 in \cite{Ke95}]\label{lem: lusin-novikov}
Let $X$, $Y$ be standard Borel spaces and let $P \subset X \times Y$ be Borel. If $(\{x\} \times Y) \cap P$ is at most countable for every $x \in X$ then $P$ is a union of an at most countable collection of Borel sets $(P_i)$, $i \in \N$ such that the fiber$(\{x\} \times Y )\cap P_i$ has at most one point for every $x \in X$ and $i \in \N$.
\end{lem}

\begin{cor}
Let $X$, $Y$ be standard Borel spaces and let $A$ be a Borel subset of $X$ such that section $\{x\} \times Y \cap A$ are at most countable for all $x \in X$, then the map $x \mapsto \lvert( {x}\times Y) \cap A\rvert$ is a Borel map from $X$ to $\N \cup \{\infty\}$.
\end{cor}

\begin{cor}\label{cor: strict uniformization}
Let $X$, $Y$ be standard Borel spaces and let $A \subset X \times Y$ be a Borel subset such that $(\{x\} \times Y) \cap A$ is countable for every $x \in X$. Then there is a collection of Borel maps $t_i : X \to Y$, $i \in \N$ such that for every $x \in X$ and $i, j$ from $\N$ with $i \neq j$ we have $t_i(x) \neq t_j(x)$, and $(\{x\} \times Y )\cap A = \{(x, t_i(x)) \vert i \in \N\}$ for every $x \in X$.
\end{cor}

\begin{cor}\label{cor: countable to one}
Let $f: X \to Y$ be a Borel map between standard Borel spaces. If $A$ is such a Borel subset of $X$ that $f^{-1}(y) \cap A$ is at most countable for all $y \in Y$ then $f(A)$ is a Borel subset of $Y$.
\end{cor}

For an equivalence relation $E$ on a set $X$ we denote $[x]_E$ the $E$-equivalence class of a point $x \in X$. We say that a function on $X$ is $E$-invariant if it is constant on equivalence classes. We say that a subset of $X$ is $E$-invariant if the indicator function of the subset is invariant.
\begin{deff}
Let $X$ be a standard Borel space. A {\em countable Borel equivalence relation} on $X$ is a Borel subset $E$ of $X \times X$ that is an equivalence relation and such that equivalence classes $[x]_E$ are countable for all $x \in X$.
\end{deff}

\begin{lem}\label{lem: nested unifimization}
Let $E_1$, $E_2$ be two Borel countable equivalence relations on a standard Borel space $X$ such that $E_1 \subset E_2$. Let $\upsilon : X \to \N \cup \{\infty\}$ be a function defined by $\upsilon(x) = \lvert [x]_{E_2} / E_1\rvert$, that is for each $x\in X$ it counts the number of $E_1$-equivalence classes the $E_2$-equivalence class of $x$ splits into. This function is $E_2$-invariant.  There is a collection of partial Borel maps $(t_i)_{i \in \N}$ such that 
\begin{enumerate}
\item $t_i(x)$ is defined iff $i \in [0, \upsilon(x))$, for $x \in X$;
\item  for all $x$ holds $x E_2 t_i(x)$ whenever  $t_i(x)$ is defined;
\item for all $x, y \in X$ such that $x E_2 y$ there is $0 \leq i < \upsilon(x)$ such that $t_i(x) E_1 y$;
\item $t_i(x) \neq t_j(x)$ for all $x \in X$ and $i \neq j$.
\end{enumerate}
\end{lem}
\begin{proof}
For two subsets $A$ and $B$ of $X \times X$ we define $A \circ B$ to be the set of all $(x, z) \in X \times X$ such that there is $y \in X$ with $(x,y) \in A$ and $(y, z) \in B$. Note that if $A$ is a Borel set and $E$ is a countable Borel equivalence relation then $A \circ E$ is a Borel set by Corolary \ref{cor: countable to one}.
We present an inductive construction of functions $t_i$ that simultaneously shows that function $\upsilon$ is Borel.
We define $t_0(x) = x$ for all $x \in X$. 
We apply the Lusin-Novikov uniformization theorem (Lemma \ref{lem: lusin-novikov}) to set $E_2$. 
We will use capitalized letters to denote graphs of functions and lowercase letters to refer to functions themselves, in particular, $p_i$ will denote the partial Borel functions those graphs are sets $P_i$ from the Lusin-Novikov theorem.

Assume that $t_{i-1}$ is already constructed. 
For each $x$ we find the smallest $j$ such that $p_j(x)$ is defined and is not $E_1$-equivalent to $t_k(x)$, for all $k<i$, and then we define $t_i(x) = p_j(x)$, leaving $t_i(x)$ undefined if there are no $j$ found. From this construction and the definition of sets $P_i$ from the Lusin-Novikov uniformization theorerm follows that functions $t_i$ satisfy all the requirement, except maybe that of being Borel. To see that they are Borel we observe that their graphs are Borel sets (they are constructed by means of countable unions, intersection, as well as projections of sets with at most countable preimages) and use Lemma \ref{lem: borel map graph}.

In more details. We assume that $t_k$ is already constructed for all $k < i$. We define inductively a sequence of partial maps $(t_i^j)_{j \in \N}$ in the following way. 
Let 
\[
T_i^j = P_j \setminus \left(\bigcup_{k<i} T_k \circ E_1 \cup \left(\proj_1\bigcup_{r<j} T_i^r \times X\right)\right),
\]
$\proj_1$ is the natural projection map from $X \times X$ to the first coordinate. We remind that $T_i^j$ denotes the graph of the partial function $t_i^j$.  Note that all sets $T_i^j$ are Borel by the standard properties of the Borel $\sigma$-algebra and by Lemma \ref{lem: injective Borel}.
We define
\[
T_i = \bigcup_{j \in \N} T_i^j.
\]
We defined $t_i^j(x)$ to be  $p_j(x)$, unless $p_j(x)$ is $E_1$-equivalent to  $t_k(x)$ for $k<i$ or $t_i^r(x)$ is already defined for $r < j$. It is easy to see that $t_i(x)$ is undefined iff for every $y E_2 x$ we have $y E_1 t_k(x)$ for some $k < i$. By Lemma \ref{lem: borel map graph}, maps $t_i$ (with graphs $T_i$) are partially-defined Borel maps. We also note that $\upsilon(x) = i$ iff $t_{i-1}(x)$ is defined and $t_i(x)$ is undefined, so the set of such points is precisely $\proj_1(T_{i-1} \setminus T_i)$.
\end{proof}
 
\begin{deff}
A countable Borel equivalence relation $E$ on a standard probability space $X$ is {\em measure preserving} (pmp) if for every Borel bijection $\psi$ between Borel subsets $A$ and $B$ of $X$, whose graph is a subset of $E$, we have $\mu(A) = \mu(B)$. We also would say that $\mu$ is $E$-invariant.
\end{deff}

The main source of countable pmp Borel equivalence relations is probability measure preserving actions of countable groups. Consider an action $G \curvearrowright X$ of a countable group $G$ on a standard probability space $(X, \mu)$ by measure preserving Borel automorphisms. The orbit equivalence relation of that action is an equivalence relation given by $x \sim y$ if $y = gx$ for some $g \in G$. It is indeed a pmp Borel countable equivalence relation on $(X, \mu)$ by the following lemma.

\begin{lem}\cite[see Theorem 1, Remark 1, Corollary 1]{FM77}\label{lem: equiv invariance reformulations}
Let $E$ be a countable Borel equivalence relation on a standard probability space $(X, \mu)$
The following are equivalent.
\begin{enumerate}
\item $E$ is pmp;
\item every partial Borel bijection $T$ whose graph is a subset of $E$ is measure-preserving;
\item there is a finite or countable collection of partial Borel measure-preserving bijections $(T_i)$, union of whose graphs is $E$;
\item there is a finite or countable collection of partial Borel measure-preserving bijections $(T_i)$, whose graphs are disjoint and whose union is $E$;
\item for any subset $Q$ of $E$ we have $$\int_X  \Bigl\lvert (\{x\} \times X) \cap Q \Bigr\rvert d\mu(x)  = \int_X \Bigl\lvert (X \times \{x\}) \cap Q \Bigr\rvert d\mu(x);$$ 
\item there is a Borel pmp action of a countable group on $X$ such that $E$ is the orbit equivalence relation of that action. 
\end{enumerate}
\end{lem}

We also have the following:

\begin{lem}\cite[see Corollary 1]{FM77}\label{lem: G E invariant equiv}
Let $G$ be a countable group that acts by Borel automorphisms on a standard Borel space $X$. 
Let $E$ be the orbit equivalence relation of that action. A Borel probability measure on $X$ is $G$ invariant iff it is $E$-invariant. 
\end{lem}

\begin{deff}
An equivalence relation is {\em ergodic} if every invariant measurable subset (i.e. a set that contains together with each point its equivalence class) has measure $0$ or $1$.
\end{deff}

\begin{lem}\label{lem: measure decomposition}[Theorem 5.14 in \cite{EW11}, p. 135]
Let $X,Y$ be standard Borel spaces, let $p: Y \to X$ be a Borel surjection. Let $\mu$ be a Borel probability measure on $X$, and $\nu$ be a Borel probability measure on $Y$ such that $p(\nu) = \mu$.  There is a unique(up to a set of zero measure) measurable map $X \to \meas(Y)$, $x \mapsto \nu_x^p$ such that $\nu^p_x \in \meas(p^{-1}(\{x\}))$ and
$\expect(f \vert X)(x) = \int_Y f(y)d\nu_x^p(y)$ for $\mu$-a.e. $x \in X$. Also, $\nu = \int_X \nu^p_x d\mu(x)$.
\end{lem}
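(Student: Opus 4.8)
The plan is to construct the conditional measures $\nu_x^p$ by applying the Riesz representation theorem fibrewise to conditional expectations, using a countable dense algebra of test functions to confine all exceptional null sets into a single one. First I would pass to a convenient topological model: since $Y$ is standard Borel, fix a Borel isomorphism of $Y$ with a Borel subset of a compact metrizable space $K$ and transport $\nu$ to $K$, so as to have $C(K)$ available. Then choose a countable $\mathbb{Q}$-subalgebra $\mathcal{A}\subset C(K)$ that contains the constants, separates the points of $K$ and is uniformly dense in $C(K)$ (for instance the $\mathbb{Q}$-algebra generated by a countable point-separating family, dense by Stone--Weierstrass).

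For each $f\in\mathcal{A}$ the conditional expectation $\expect(f\mid p^{-1}\mathcal{B}_X)$ with respect to $\nu$, where $\mathcal{B}_X$ is the Borel $\sigma$-algebra of $X$, is $p^{-1}\mathcal{B}_X$-measurable, hence equals $h_f\circ p$ for a $\mu$-a.e.\ defined Borel function $h_f\colon X\to\R$, and I take $h_f(x)$ to be this value. Because $\mathcal{A}$ is countable, outside a single $\mu$-null set the assignment $f\mapsto h_f(x)$ is simultaneously $\mathbb{Q}$-linear, positive, normalized ($h_1\equiv 1$) and bounded by $\|f\|_\infty$; each of these is an a.e.\ identity for a fixed function (or a fixed pair), and only countably many constraints are involved. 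For such $x$ the functional extends by continuity to a positive, unital linear functional on $C(K)$, and the Riesz representation theorem produces a unique $\nu_x\in\meas(K)$ with $\int f\, d\nu_x = h_f(x)$.

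Next I would verify the stated properties. Measurability of $x\mapsto\nu_x$ in the weak${}^{*}$ sense is immediate, since $x\mapsto\int f\, d\nu_x = h_f(x)$ is Borel for every $f$ in the generating algebra. The identity $\nu=\int_X \nu_x\, d\mu$ holds because $\int_Y f\, d\nu = \int_X h_f(x)\, d\mu(x) = \int_X (\int f\, d\nu_x)\, d\mu(x)$ for $f\in\mathcal{A}$ and both sides are weak${}^{*}$-continuous in $f$; a routine monotone-class argument then upgrades the conditional-expectation formula $\expect(f\mid X)(x)=\int_Y f\, d\nu_x$ from $f\in\mathcal{A}$ to all bounded Borel $f$. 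The one genuinely substantive point is \emph{properness}, that $\nu_x$ is carried by the fibre $p^{-1}(\{x\})$. I would deduce it from the pull-out property of conditional expectation: for bounded Borel $\phi$ on $X$ and bounded Borel $f$ on $Y$, $\expect((\phi\circ p)f\mid p^{-1}\mathcal{B}_X) = (\phi\circ p)\,\expect(f\mid p^{-1}\mathcal{B}_X)$, which reads on the base as $\int (\phi\circ p)f\, d\nu_x = \phi(x)\int f\, d\nu_x$ for $\mu$-a.e.\ $x$. Fixing a countable point-separating family of bounded continuous $\psi$ on $X$ and taking $\phi=\psi$, $f=\psi\circ p$ gives $\int(\psi\circ p)^2\, d\nu_x = \psi(x)^2 = (\int\psi\circ p\, d\nu_x)^2$, so $\psi\circ p$ has zero variance under $\nu_x$ and thus $\psi(p(y))=\psi(x)$ for $\nu_x$-a.e.\ $y$. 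Intersecting the countably many null sets and using that the $\psi$ separate points yields $p(y)=x$ for $\nu_x$-a.e.\ $y$, i.e.\ $\nu_x(p^{-1}(\{x\}))=1$. Since $\nu=\int\nu_x\, d\mu$ and $\nu(Y)=1$, also $\nu_x(Y)=1$ a.e., and restricting to $Y$ delivers $\nu_x^p\in\meas(p^{-1}(\{x\}))$.

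Finally, for uniqueness suppose $x\mapsto\nu_x'$ is another family with the stated properties. Then for every $f\in\mathcal{A}$ and every bounded Borel $\phi$ on $X$ both $\int_X \phi(x)\big(\int f\, d\nu_x'\big)\, d\mu(x)$ and $\int_X \phi(x)\big(\int f\, d\nu_x\big)\, d\mu(x)$ equal $\int_Y (\phi\circ p)f\, d\nu$, whence $\int f\, d\nu_x' = \int f\, d\nu_x$ for $\mu$-a.e.\ $x$; ranging over the countable algebra $\mathcal{A}$ forces $\nu_x'=\nu_x$ for $\mu$-a.e.\ $x$. I expect the main obstacle to be the bookkeeping in the second step---upgrading the separate a.e.\ algebraic identities for individual functions into one co-null set on which $f\mapsto h_f(x)$ is an honest normalized positive functional---which is exactly the role played by the countability and density of $\mathcal{A}$; the topological input, countable additivity and the representing measures, is supplied wholesale by the initial reduction to the compact metric model $K$.
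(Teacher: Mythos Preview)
The paper does not prove this lemma at all; it is stated purely as a citation to Theorem~5.14 of Einsiedler--Ward and used as a black box thereafter. Your argument is a correct, self-contained proof of the disintegration theorem along the standard route---conditional expectations on a countable dense $\mathbb{Q}$-algebra in a compact metric model, Riesz representation fibrewise, properness via the pull-out identity, and uniqueness by testing against the same countable algebra---and is essentially the proof given in the cited reference. One small point worth making explicit: when you invoke ``a countable point-separating family of bounded continuous $\psi$ on $X$'' for the properness step, you are implicitly fixing a Polish topology on $X$ compatible with its Borel structure (just as you did for $Y$); this is harmless but should be stated, since $X$ is only assumed standard Borel.
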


\begin{deff}
An pmp countable Borel equivalence relation $E$ on a standard probability space $(X, \mu)$ is called {\em amenable} if there is a sequence of measure-valued functions $(\tau_i)$ such that $\tau_i(x) \in \meas([x]_E)$, for all $x \in X$ and $i \in \N$, and for $\mu$-almost every $x \in X$ and every $y \in [x]_E$ we have 
\[
\lim_{i \to \infty}\lvert \tau_i(x) - \tau_i(y)\rvert = 0,
\]
$\lvert \cdot \rvert$ stands for the total variation norm.
\end{deff}

\begin{deff}
A countable Borel equivalence relation $E$ on a standard Borel space $X$ is called {\em hyperfinite} if it is a union of an increasing sequence of finite equivalence relations. Assuming that $X$ is endowed with a Borel probability measure $\mu$, we say that $E$ is hyperfinite almost everywhere, if its restriction to a full-measure subset of $X$ is hyperfinite.
\end{deff}

\begin{deff}
A group $G$ is called amenable if for every finite subset $S$ of $G$ and every $\varepsilon > 0$  there is a non-empty finite subset $F= F(S, \varepsilon)$ of $G$ such that 
\[
\lvert SF \setminus F\rvert \leq \varepsilon \lvert F\rvert.
\]
\end{deff}

We collect important facts about amenability in the context of orbit equivalence in the following theorem.
\begin{theor}[\cite{Dy59}, \cite{CFW81}, \cite{OW80}, \cite{Ka97} and \cite{KeM04}, Theorems 10,1 and 10.7]\label{thm: amenability}
A pmp countable equivalence relation on a standard probability space is amenable iff it is hyperfinite a.e. The orbit equivalence relation of a pmp action of a countable amenable group on a standard probability space is amenable. All ergodic countable pmp amenable equivalence relations on standard probability spaces are isomorphic.
\end{theor}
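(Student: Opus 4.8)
The statement collects three independent classical facts, so the plan is to treat each in turn, flagging at the outset that the genuinely hard core is the Connes--Feldman--Weiss implication; the other two assertions I would handle by direct construction.

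First I would dispose of the two ``easy'' directions by exhibiting the required measure-valued functions explicitly. If $E = \bigcup_n E_n$ is an increasing union of finite subrelations, set $\tau_n(x)$ to be the normalized counting measure on the finite class $[x]_{E_n}$. For any $y \in [x]_E$ one has $y \in [x]_{E_n}$ for all large $n$, hence $[x]_{E_n} = [y]_{E_n}$ and $\tau_n(x) = \tau_n(y)$ eventually, so the total variation distance is eventually $0$ and $E$ is amenable; this gives hyperfinite $\Rightarrow$ amenable. For the orbit relation of an action of an amenable group $G$, I would fix a (right) F{\o}lner sequence $(F_n)$ and put $\tau_n(x) = \frac{1}{|F_n|}\sum_{g \in F_n}\delta_{gx} \in \meas([x]_E)$. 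For $y = hx$ a reindexing gives $\tau_n(y) = \frac{1}{|F_n|}\sum_{g \in F_n h}\delta_{gx}$, whence $|\tau_n(x) - \tau_n(y)| \le |F_n \triangle F_n h|/|F_n| \to 0$ for each fixed $h$ by the F{\o}lner condition; thus the orbit relation is amenable.

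The main obstacle is the converse, amenable $\Rightarrow$ hyperfinite a.e., which is the theorem of Connes--Feldman--Weiss and admits no short proof. Here the plan is to feed the almost-invariant measures $\tau_i$ into an Ornstein--Weiss quasi-tiling (Rokhlin-type) argument to produce a sequence of finite measurable subrelations that approximate $E$ ever more closely: one tiles almost all of $X$ by finite pieces that are almost invariant under the partial transformations generating $E$, and then organizes these pieces into finite subrelations $E_1 \subset E_2 \subset \cdots$ whose union agrees with $E$ off a null set. Converting the soft almost-invariance furnished by the $\tau_i$ into an honest increasing exhaustion by finite relations is the delicate measure-theoretic heart of the matter, and I would follow CFW rather than attempt a new route.

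Finally, for the uniqueness of the ergodic amenable (equivalently, by the previous part, hyperfinite) relation, the plan is Dye's theorem. Having reduced to two ergodic hyperfinite pmp relations, I would present each as an increasing union of finite subrelations whose classes have sizes matching the same asymptotic profile, using ergodicity and measure preservation to equidistribute class sizes, and then run a back-and-forth exhaustion argument: construct partial measure isomorphisms matching the finite pieces while controlling their $L^1$-displacement, so that they converge to a single measure isomorphism carrying one relation onto the other. The recurring technical point, again classical, is the approximate-matching lemma for finite ergodic pieces, which lets each step correct the error incurred at the previous one; its careful bookkeeping is where the real work lies.
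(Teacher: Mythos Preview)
The paper does not prove this theorem at all: it is stated purely as a compilation of classical results, with citations to Dye, Connes--Feldman--Weiss, Ornstein--Weiss, Kaimanovich, and Kechris--Miller, and is used as a black box elsewhere in the paper. There is therefore nothing in the paper to compare your proposal against.

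That said, your outline is a faithful sketch of the standard literature proofs: the F{\o}lner-averaging construction for amenable group actions, the normalized-counting-measure construction for the hyperfinite $\Rightarrow$ amenable direction, the CFW quasi-tiling argument for the converse, and Dye's back-and-forth for uniqueness are exactly the ingredients the cited references supply. If you were writing this up in earnest you would need to be more careful in the F{\o}lner argument when the action is not free (distinct $g$'s may give the same $gx$, so $\tau_n(x)$ is a pushforward of the uniform measure on $F_n$ rather than a uniform measure on $F_n x$; the estimate still goes through but the bookkeeping is slightly different), and of course the CFW and Dye parts are only indicated at the level of strategy. But since the paper itself treats the theorem as a citation, your proposal already goes well beyond what the paper provides.
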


\begin{deff}
We say that a standard Borel space $Z$ is {\em fibered} over a standard Borel space $X$ if a Borel map $p: Z \to X$ is fixed. The {\em fiber} $Z_x$ of $Z$ over $ \in X$ is defined by $Z_x = p^{-1}$(\{x\}).
A {\em Borel action of a countable Borel equivalence relation} $(X,E)$ (or an {\em extension of the equivalence relation}) is a fibered over $X$ space $Z$ endowed with a countable Borel equivalence relation $E_Z$ such that $p: Z \to X$ is {\em class-bijective}, that is $p$ is a bijection between equivalence classes $[z]_{E_Z} \to [p(z)]_{E}$ for all $z$ in $Z$. The latter implies that for all $x,y\in X$ such that $x E y$, we may define a Borel map $\lift_{X,Z}(x,y) : Z_x \to Z_y$ by $\lift_{X,Z}(x,y)(z) = z'$ for $z \in Z_x$ iff $z' \in Z_y$ and $z E_Z z'$. 
\end{deff}

We would usually prefer the term ``action'' instead of ``extension'' so as to not make confusion with order-extensions.

\begin{deff}
Let $Z$ be a fibered Borel space over a standard probability space $(X, \mu)$. A measure-section of $Z$ is a map $f: X \to \meas(Z)$ such that $f(x) \in \meas(p^{-1}(\{x\}))$. We say that two measure-sections $f_1, f_2$ are equivalent if $f_1(x) = f_2(x)$ for $\mu$-a.e. $x \in X$.  
\end{deff}

\begin{deff}
Let $(Z, E_Z, p)$ be a Borel action action of $(X, E)$. Let $X$ be endowed with a Borel probability measure $\mu$. An {\em invariant measure-section} 
for this action is a map $x \to \meas(Z)$, $x \mapsto \eta_x$ such that for almost every $x\in X$ holds $\eta_x(Z_x) = 1$ and for all $y \in [x]_E$ we have $\lift_{X,Z}(x,y)(\eta_x) = \eta_y$.
\end{deff}

\begin{lem}\label{lem: inv measure is invariant section}
The set of classes of invariant measure-sections (equal on full $\mu$-measure subsets of $X$) on $Z$ is naturally isomorphic to the set of all $E_Z$-invariant measures on $Z$ whose projection on $X$ is $\mu$. Namely, for any invariant measure-section $\eta_x$ on $Z$ over $(X, \mu)$, the integral $\kappa$ is a $E_Z$-invariant measure on $Z$ whose projection is on $X$ is $\mu$. For any $E_Z$-invariant measure $\kappa$ on $Z$, there is a unique invariant measure-section on $Z$, $\eta_x$, given by the measure decomposition of $\kappa$ over $\mu$, such that $\kappa= \int_X \eta_x d\mu(x)$.
\end{lem}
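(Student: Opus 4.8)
The plan is to exhibit two mutually inverse assignments between the two sets and to check that each is well defined. In one direction, to an invariant measure-section $x \mapsto \eta_x$ I associate $\kappa = \int_X \eta_x \, d\mu(x)$; in the other, to an $E_Z$-invariant measure $\kappa$ with $p(\kappa) = \mu$ I associate its disintegration $x \mapsto \eta_x := \kappa^p_x$ furnished by Lemma \ref{lem: measure decomposition}. The whole argument will be organized around a fixed countable generating family. By Lemma \ref{lem: equiv invariance reformulations} there is a countable family $(T_i)$ of $\mu$-preserving partial Borel bijections of $X$ whose graphs cover $E$. Since $p$ is class-bijective, each $T_i \colon A_i \to B_i$ lifts to a partial Borel bijection $\tilde T_i \colon p^{-1}(A_i) \to p^{-1}(B_i)$, acting on the fibre $Z_x$ by $\lift_{X,Z}(x, T_i x)$; one checks that $p \circ \tilde T_i = T_i \circ p$, that $\tilde T_i$ maps $Z_x$ bijectively onto $Z_{T_i x}$, and that the graphs of the $\tilde T_i$ cover $E_Z$. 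Consequently, by Lemma \ref{lem: equiv invariance reformulations} applied to $(Z, E_Z)$, a measure $\kappa$ on $Z$ is $E_Z$-invariant if and only if every $\tilde T_i$ preserves it.

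For the direction from a measure to a section, write $\kappa = \int_X \eta_x \, d\mu(x)$ with $\eta_x \in \meas(Z_x)$ from Lemma \ref{lem: measure decomposition}. I first fix one generator $\tilde T_i$: pushing the disintegration forward by $\tilde T_i$ and comparing with the disintegration of $\tilde T_i(\kappa) = \kappa$, the uniqueness clause of Lemma \ref{lem: measure decomposition} yields $\tilde T_i(\eta_x) = \eta_{T_i x}$, that is $\lift_{X,Z}(x, T_i x)(\eta_x) = \eta_{T_i x}$, for $\mu$-a.e. $x \in A_i$, say off a null set $N_i$. The step I expect to be the main obstacle is passing from these countably many a.e.\ statements, one per generator, to a single co-null set on which the lift identity holds simultaneously along entire $E$-classes. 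Here I use that $E$ is pmp: the saturation $[N]_E$ of $N = \bigcup_i N_i$ is again null, being a countable union of images of $N$ under the $\mu$-preserving partial bijections $T_i^{\pm 1}$. On the co-null $E$-invariant set $X_0 = X \setminus [N]_E$ the identity $\lift_{X,Z}(x, T_i x)(\eta_x) = \eta_{T_i x}$ holds for every $i$ and every $x \in X_0 \cap A_i$. Since any $y \in [x]_E$ is obtained from $x$ by a finite word in the $T_i^{\pm 1}$, and $\lift_{X,Z}$ obeys the cocycle identity $\lift_{X,Z}(y,z) \circ \lift_{X,Z}(x,y) = \lift_{X,Z}(x,z)$, composing the generator identities along such a word gives $\lift_{X,Z}(x,y)(\eta_x) = \eta_y$ for all $y \in [x]_E$. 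Thus $x \mapsto \eta_x$ is an invariant measure-section.

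Conversely, given an invariant measure-section $\eta_x$, set $\kappa = \int_X \eta_x \, d\mu(x)$. Its projection is $\mu$ because $p(\eta_x) = \delta_x$, and a change of variables using the $\mu$-invariance of each $T_i$ together with $\tilde T_i(\eta_x) = \eta_{T_i x}$ shows that each $\tilde T_i$ preserves $\kappa$, so $\kappa$ is $E_Z$-invariant by the reduction of the first paragraph. Finally, the two assignments are mutually inverse: the uniqueness part of Lemma \ref{lem: measure decomposition} identifies the disintegration of $\int_X \eta_x \, d\mu(x)$ with $\eta_x$, while its reconstruction part gives $\int_X \kappa^p_x \, d\mu(x) = \kappa$. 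Passing to equivalence classes modulo $\mu$-null sets on the section side makes both maps well defined and inverse to one another, yielding the claimed natural isomorphism.
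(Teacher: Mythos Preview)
The paper states this lemma without proof, treating it as a standard fact about disintegration over pmp equivalence relations. Your argument is correct and supplies precisely the expected details: you reduce $E_Z$-invariance to invariance under a countable generating family of lifted partial bijections $\tilde T_i$ via Lemma~\ref{lem: equiv invariance reformulations}, use the uniqueness clause of Lemma~\ref{lem: measure decomposition} to obtain the fibrewise identity $\lift_{X,Z}(x,T_i x)(\eta_x)=\eta_{T_i x}$ off a null set for each $i$, and then saturate the union of these null sets to an $E$-invariant null set so that the identity holds along entire classes. One minor simplification is available: since the graphs of the $T_i$ already cover all of $E$, every $y\in[x]_E$ is of the form $T_i x$ for a single $i$, so once $x$ lies in the saturated good set $X_0$ a single application of the generator identity yields $\lift_{X,Z}(x,y)(\eta_x)=\eta_y$ directly, and the word-composition via the cocycle identity is not needed. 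Otherwise the proof is complete and is the natural one.
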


For an extension $(Z, E_Z)$ of a measure-preserving equivalence relation $(X, \mu, E)$, we denote $\invmeas(Z, \mu, E_Z)$ the space of al $E_Z$ - invariant Borel probability measures on $Z$ those projection on $X$ is $\mu$ (the equivalence relation $(X, \mu, E)$ would usually be implied).

Let $(Z_1, E_1)$ and $(Z_2, E_2)$ be two Borel actions of a countable Borel equivalence relation $E$ on a standard Borel space $X$. We say that that $Z_2$ is a factor of $Z_1$ if a Borel map $\varphi: Z_1 \to Z_2$ is fixed such that $p_1 = p_2 \circ \varphi$, where $p_i : Z_i \to X$ are the natural projections. 
\[
\begin{tikzcd}
Z_1 \ar[rr, "\varphi"]\ar[ddr, "p_1"']& & Z_2\ar[ddl, "p_2"] \\\\
 & X 
\end{tikzcd}
\]

From the previous lemma we get the following:
\begin{lem}\label{lem: invariant measure extension property}
Assume that $(Z_1, E_1)$ and $(Z_2, E_2)$ are two actions (class-bijective extension) of a countable pmp Borel equivalence relation $E$ on a standard probability space $(X, \mu)$  such that $(Z_2, E_2)$ is a factor of $(Z_1, E_1)$.
The following are equivalent:
\begin{enumerate}
\item for every $E_1$-invariant measure $\kappa$ on $Z_2$ such that $p_2(\kappa) = \mu$ there is an invariant measure $\gamma$ on $Z_1$ such that $\varphi(\gamma) = \kappa$;
\item for every invariant measure-section $(\eta_x)_{x \in X}$ on $(Z_2, E_2)$ there is an invariant measure-section $(\xi_x)_{x \in X}$,  such that $\varphi(\xi_x) = \eta_x$ for $\mu$-a.e. $x \in X$.
\end{enumerate}
\end{lem}

\begin{deff}
In the setting of the previous lemma we would say that the factorization $(Z_1, E_1) \to (Z_2, E_2)$ over the pmp Borel equivalence relation $(X, \mu, E)$ has {\em the invariant measure extension property}, if the equivalent requirements hold.
\end{deff}

For any two actions $(Z_1, E_1)$ and $(Z_2, E_2)$ of a countable Borel equivalence relation $(E, X)$, we define their product $(Z_1, E_1) \otimes_{(X,E)} (Z_2, E_2)$ as a set $\{(z_1, z_2) \vert z_1 \in Z_1, z_2 \in Z_2, p_1(z_1) = p_2(z_2)\}$. We endow it with equivalence relation: $(z_1, z_2) \sim (z'_1, z'_2)$ iff $z_1 \sim z'_1$ and $z_2 \sim z'_2$. The latter makes $(Z_1, E_1) \otimes_{(X,E)} (Z_2, E_2)$ a class-bijective extension of $(X,E)$.
Assume $X$ is endowed with an $E$-invariant probability measure $\mu$, $Z_1$ and $Z_2$ - with invariant measures $\nu_1, \nu_2$ respectively such that $p_1(\nu_1) = \mu$ and $\pi_2(\mu_2) = \mu$. We can endow $Z_1 \otimes_{(X,E)} Z_2$ with the so-called {\em relatively-independent product} measure defined by $\lambda = \int_X \nu_1\vert_x^{p_1} \otimes \nu_2\vert_x^{p_2}$. It is easy to check that $\lambda$ is an invariant measure on  $Z_1 \otimes_{(X,E)} Z_2$, using Lemma \ref{lem: inv measure is invariant section}.

\section{Orders on equivalence relations}\label{sec: orders on equiv}

We remind that if $\varphi: A \to B$ is a map, then we can define $\varphi^{Ord}: \pord(B) \to \pord(A)$ by $a_1 (\varphi^{Ord}\prec) a_2$ iff $\varphi(a_1) \prec \varphi(a_2)$. We also can push measures forward along $\varphi^{Ord}$.

Let $E$ be a Borel equivalence relation on a standard Borel space $X$. We define the order-action $\pord(X,E)$ of $E$ in the following way. The $X$-fibered space would be the space of all pairs $\{(x, \omega)\vert\, x\in X, \omega \in \pord([x]_E)\}$. The latter space does not have an {\em a priori} standard Borel structure. To endow it with one, we note that it could be identified with the space $X \times \pord(\N)$, the identification is given by a full injective set of Borel transversals $(t_i)_{i \in \N}$ i.e. Borel maps $t_i: X \to X$ such that $[x]_E = \{t_i(x)\}$ and  $t_i(x) \neq t_j(x)$ for $i \neq j$, see Corollary \ref{cor: strict uniformization}. We note that that the Borel structure obtained does not depend on the choice of the collection of Borel transversals.

The equivalence relation on $\pord(X,E)$ is given by $(x_1, \omega_1)$ is equivalent to $(x_2, \omega_2)$ iff $x_1 E x_2$ and $\omega_1 = \omega_2$. We also define the order-extension-action $\ordext(X,E)$ of $E$. The $X$-fibered space would be $\{(x, \omega, \omega') \vert\, x\in X, \omega \in \pord([x]_E), \omega' \in \ext(\omega) \}$, two points $(x_1, \omega_1, \omega'_1)$ and $(x_2, \omega_2, \omega'_2)$ would be considered equivalent if $x_1 E x_2$, $\omega_1 = \omega_2$ and $\omega'_1 = \omega'_2$. Note that $\pord(X,E)$ is a factor of $\ordext(X,E)$. 


\begin{deff}
Let $E$ be a pmp countable Borel equivalence relation on a standard probability space $(X,\mu)$. 
An {\em invariant random partial order} is an invariant measure-section $(\eta_x)_{x \in X}$ on $\pord(X, E)$. We say that the invariant random order can be extended if there is an invariant measure-section $(\xi_x)_{x \in X}$ on $\ordext(X, E)$ such that $\proj_{\pord(X,E)}(\xi_x) = \eta_x$ for $\mu$-a.e. $x \in X$. We say that $E$ has the {\em IRO(invariant random order)-extension property} if every invariant random partial order on $E$ can be extended as described earlier. 
\end{deff}

Let $E_1 \subset E_2$ be a countable Borel pmp equivalence relations on $(X, \mu)$.
Note that there is a natural morphism of Borel actions of $E$, $$\proj_{\pord(X,E_1)}: \pord(X, E_2) \to \pord(X, E_1).$$ Its value on $(x, \omega)$, for $x \in X$ and $\omega \in \pord([x]_{E_2})$, would be $(x, \omega \vert_{[x]_{E_1}})$. Similarly a natural morphism $$\proj_{\ordext(X, E_1)} : \ordext(X, E_2) \to \ordext(X, E_1)$$ is defined. 
\begin{lem}\label{lem: iro induction}
For every random partial order $\eta$ on $E_1$ there is an induced partial order $\eta^{E_2:E_1}$ such that $\proj_{\pord(X,E_1)}(\eta_x^{E_2:E_1}) = \eta_x$. 
\end{lem}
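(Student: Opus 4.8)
The plan is to define $\eta^{E_2:E_1}$ by putting on each $E_2$-class the partial order obtained by independently sampling, for every $E_1$-block contained in that class, a random $E_1$-order distributed according to $\eta$, and then declaring points lying in distinct blocks incomparable. I would first give this construction intrinsically, so that $E_2$-invariance becomes manifest, and only afterwards realize it measurably through transversals, which is where the real work lies.

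\emph{Intrinsic description.} Fix an $E_2$-class $C$. Since $E_1 \subset E_2$, the relation $E_1$ partitions $C$ into countably many blocks $\{B_j\}_j$, each of which is an $E_1$-class. For a block $B = [y]_{E_1}$ the invariance of the measure-section $\eta$ gives $\lift_{X,\pord(X,E_1)}(y,y')(\eta_y) = \eta_{y'}$ for all $y,y' \in B$, so there is a canonical measure $\eta^B \in \meas(\pord(B))$ on orders of the block that does not depend on the chosen base point. Let $\Phi_C : \prod_j \pord(B_j) \to \pord(C)$ be the disjoint-union map, sending a family of block-orders to the unique partial order on $C$ that agrees with each of them on its block and makes points of distinct blocks incomparable; this is Borel and lands in $\pord(C)$, since a disjoint union of partial orders is again a partial order. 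I would then set $\eta^{E_2:E_1}_C = (\Phi_C)_*\bigl( \bigotimes_j \eta^{B_j} \bigr)$ and define $\eta^{E_2:E_1}_x = \eta^{E_2:E_1}_{[x]_{E_2}}$. Because this measure refers only to the class $[x]_{E_2}$, its $E_1$-block structure, and the canonical block measures $\eta^{B_j}$ — and never to the base point $x$ — the equivariance $\lift_{X,\pord(X,E_2)}(x,y)(\eta^{E_2:E_1}_x) = \eta^{E_2:E_1}_y$ for $x E_2 y$ would be immediate. The projection condition would be equally direct: $\proj_{\pord(X,E_1)}$ restricts an order on $[x]_{E_2}$ to the block $[x]_{E_1}$, and the $[x]_{E_1}$-marginal of the independent product $\bigotimes_j \eta^{B_j}$ is precisely $\eta^{[x]_{E_1}} = \eta_x$.

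The main technical point will be measurability of $x \mapsto \eta^{E_2:E_1}_x$, and this is exactly where one must avoid assuming a global Borel transversal for $E_1$ (which need not exist, as $E_1$ may be non-smooth). To circumvent this I would reuse the full injective family of Borel transversals $(t_i)_{i\in\N}$ for $E_2$ that already identifies $\pord(X,E_2)$ with $X \times \pord(\N)$: under it, the $E_1$-block structure of $[x]_{E_2}$ becomes a Borel field of equivalence relations on $\N$. Enumerating the blocks by least index and taking the minimal-index transversal point $r_k(x) = t_{m_k(x)}(x)$ as a block representative would yield Borel selectors of representatives for the $E_1$-blocks \emph{inside each $E_2$-class} — using only $E_2$-transversals rather than genuine $E_1$-representatives, which is the crucial point. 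The per-block measures then become $\nu_k(x)$, the image of $\eta_{r_k(x)}$ under the transversal identification, Borel in $x$ since $\eta$ is a measure-section; the countable product $\bigotimes_k \nu_k(x)$ and its pushforward under the Borel disjoint-union map would depend measurably on $x$ by the standard measurability of countable products of Borel-varying probability measures, with the finitely-many-blocks case handled by taking the missing factors to be point masses on the empty order.

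Putting these together would exhibit $x \mapsto \eta^{E_2:E_1}_x$ as an invariant measure-section on $\pord(X,E_2)$, that is, an invariant random partial order on $E_2$, satisfying $\proj_{\pord(X,E_1)}(\eta^{E_2:E_1}_x) = \eta_x$ for $\mu$-a.e. $x$; equivalently, by Lemma \ref{lem: inv measure is invariant section}, the integral $\int_X \eta^{E_2:E_1}_x\, d\mu(x)$ would be the desired $E_2$-invariant measure on $\pord(X,E_2)$ projecting to the given $\eta$. I expect the only subtlety beyond bookkeeping to be the measurable handling of the variable block count, which the point-mass convention above resolves.
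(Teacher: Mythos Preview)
Your construction is exactly the paper's: sample independent $E_1$-orders on the $E_1$-blocks of each $E_2$-class via $\eta$ and declare points in distinct blocks incomparable. The paper's proof simply states this and notes that $E_1$-invariance of $\eta$ makes the result independent of the chosen block representatives and constant on $E_2$-classes, without spelling out the measurability argument you supply.
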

\begin{proof}
For every $x \in X$, the equivalence class $[x]_{E_2}$ is a disjoint union of $E_1$-equivalence classes, 
apply Lemma \ref{lem: nested unifimization}, to get representatives $t_i(x)$ of these. We define the random order on $[x]_{E_2}$ independently on each $E_1$-class $[t_i(x)]_{E_1}$ by $\eta_{t_i(x)}$, and leave elements from different $E_1$-equivalence classes uncomparable. It is easy to see that the obtained measure does not depend on the choice of representatives $t_i(x)$, by $E_1$-invariance of $\eta$, for almost every $x$, moreover, the measures would be equal for $E_2$-equivalent points.
\end{proof}

\begin{lem}\label{lem: iro equiv hereditary}
Let $E_1 \subset E_2$ be two pmp countable Borel equivalence relations on a standard probability space $(X, \mu)$. If $E_2$ has the IRO-extension property then so does $E_1$. 
\end{lem}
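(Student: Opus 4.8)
The plan is to push the problem up to $E_2$, solve it there using the hypothesis, and then restrict the solution back down to $E_1$. Concretely, given an invariant random partial order $(\eta_x)_{x\in X}$ on $E_1$, I would first form the induced order on $E_2$, extend it to a random total order on the $E_2$-classes, and finally restrict each such total order to the relevant $E_1$-subclass. The operation that makes this descent work is restriction of orders: if $\omega'$ is a total order on a set and $S$ is a subset, then $\omega'|_S$ is again total, and if $\omega'$ extends a partial order $\omega$ then $\omega'|_S$ extends $\omega|_S$.

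The steps are as follows. First, apply Lemma \ref{lem: iro induction} to $(\eta_x)$ to obtain the induced invariant random partial order $(\eta^{E_2:E_1}_x)_{x\in X}$ on $E_2$, which satisfies $\proj_{\pord(X,E_1)}(\eta^{E_2:E_1}_x)=\eta_x$ for $\mu$-a.e. $x$. Second, since $E_2$ has the IRO-extension property, choose an invariant measure-section $(\xi_x)_{x\in X}$ on $\ordext(X,E_2)$ whose order-marginal is $(\eta^{E_2:E_1}_x)$, i.e. $\proj_{\pord(X,E_2)}(\xi_x)=\eta^{E_2:E_1}_x$ for a.e. $x$. Third, define the candidate extension of $(\eta_x)$ by pushing $(\xi_x)$ forward along the restriction morphism $R=\proj_{\ordext(X,E_1)}\colon \ordext(X,E_2)\to\ordext(X,E_1)$, which sends $(x,\omega,\omega')$ to $(x,\omega|_{[x]_{E_1}},\omega'|_{[x]_{E_1}})$; that is, set $\xi'_x=R_*(\xi_x)$.

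It then remains to verify that $(\xi'_x)$ is an invariant measure-section on $\ordext(X,E_1)$ extending $(\eta_x)$. That $\xi'_x$ is concentrated on the fiber of $\ordext(X,E_1)$ over $x$ follows from the remark above on restriction of orders; that its order-marginal equals $\eta_x$ follows because forgetting the total order commutes with restriction to $E_1$, so the order-marginal of $R_*\xi_x$ is the $E_1$-restriction of $\eta^{E_2:E_1}_x$, namely $\eta_x$. The one point requiring a genuine (if routine) check, which I expect to be the main obstacle, is the invariance of $(\xi'_x)$: one must show that $R$ intertwines the lift maps along $E_1$. Since $E_1\subset E_2$, for $x\,E_1\,y$ both $\lift_{X,\ordext(X,E_2)}(x,y)$ and $\lift_{X,\ordext(X,E_1)}(x,y)$ are defined, the former sends $(x,\omega,\omega')$ to $(y,\omega,\omega')$, and because $[x]_{E_1}=[y]_{E_1}$ applying $R$ before or after the lift yields the same point $(y,\omega|_{[x]_{E_1}},\omega'|_{[x]_{E_1}})$. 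Hence $R\circ\lift_{X,\ordext(X,E_2)}(x,y)=\lift_{X,\ordext(X,E_1)}(x,y)\circ R$ for $x\,E_1\,y$, so pushing the $E_2$-invariant (and therefore $E_1$-invariant) section $(\xi_x)$ forward along $R$ produces an $E_1$-invariant section. Thus $(\xi'_x)$ is the required extension of $(\eta_x)$, and $E_1$ has the IRO-extension property.
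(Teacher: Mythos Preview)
Your proposal is correct and follows exactly the paper's approach: induce the $E_1$-random order up to $E_2$ via Lemma~\ref{lem: iro induction}, extend there using the hypothesis, and project back down along $\proj_{\ordext(X,E_1)}$. The paper summarizes the same three steps in a commutative square and declares the final verifications ``clear''; you have simply written out those verifications (fiberwise support, marginal, and $E_1$-invariance via intertwining of lifts) in detail.
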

\begin{proof}

The following illustrates the proof, dashed lines represent lifts of measures:
\begin{equation*}
\begin{tikzcd}
\ordext(X, E_2) \ar[r] \ar[d] &\ordext(X, E_1) \ar[d]\\
\pord(X, E_2) \ar[r] \uar[dashed, bend left, "\text{assumption}"] & \pord(X, E_1) \lar[dashed, bend left, "\text{Lemma  \ref{lem: iro induction}}"]
\end{tikzcd}
\end{equation*}
Let $\eta$ be an invariant random order on $(X,E_1)$. By the previous lemma, there is an induced invariant random order $\eta' = \eta^{E_2 : E_1}$ on $E_2$. By the IRO-extension property of $E_2$, we have an extension of $\eta'$, an invariant measure-section $\xi'$ of $\ordext(X,E_2)$ such that $\proj_{\pord(X,E_2)}\xi'_x = \eta'_x$ for $\mu$-a.e. $x \in X$. Define $\xi_x = \proj_{\ordext(X,E_1)}(\xi'_x)$, for $x \in X$. It is clear that $\xi_x$ is an invariant measure section of $\ordext(X,E_1)$ and $\proj_{\pord(X,E_1)}(\xi_x) = \eta_x$ for $\mu$-a.e. $x \in X$.
\end{proof}
We define the order-extension property for equivalence relations on the level of fibers, this was instrumental for the induction lemma above. In the sequel we will need an equivalent definition in terms of lifts of ``integrated'' measures that follows simply from Lemma \ref{lem: invariant measure extension property} on the invariant measure extension property:
\begin{lem}\label{lem: iro fiberwise eq full}
A pmp countable Borel equivalence relation $E$ on $(X,\mu)$ has the IRO-extension property iff for every invariant measure $\kappa$ on $\pord(X,E)$ such that $\proj_X(\kappa) = \mu$, there is an invariant measure $\gamma$ on $\ordext(X,E)$ such that $\proj_{\pord(X,E)}(\gamma) = \kappa$. In other words, $(X, \mu, E)$ has the IRO-extension property iff the factorization of two actions $\ordext(X,E) \to \pord(X,E)$ of $(X, \mu, E)$ has the invariant measure extension property.
\end{lem}

Let $E$ be an orbit equivalence relation of a free Borel action of a countable group $G$. 
Observe that $\pord(X,E)$ is isomorphic to the orbit equivalence relation of the action $G \acts X \times \pord(G)$. 
In order to see this, notice that fibered spaces $\proj_X : X \times G \to X$ and $\proj_1 : E \to X$ ($\proj_1$ is the projection from $E \subset X \times X$ to the first coordinate) are isomorphic: $(x, g) \mapsto (x, xg)$. Note that $\lift_{X,\pord(X,E)}(x, gx) (\omega)= g\omega$, for $x \in X$, $\omega \in \pord(G)$ and $g \in G$.
In the similar way, $\ordext(X,E)$ is isomorphic to the orbit equivalence relation of the action $G \acts X \times \ordext(G)$. 

For a Borel action $G \acts X$ we denote $\orb(G \acts X)$ the orbit equivalence relation of this action (we remind that two points $x,y \in X$ are equivalent under this equivalence relation iff $x = gy$ for some $g \in G$). With this notation we get the following.
\begin{lem}\label{lem: cd for free actions}
Let $G \acts X$ be a free Borel action of a countable group on a standard Borel space and let $E$ be the orbit equivalence relation of said action. We have the following commutative diagram:
\begin{equation*}
	\begin{tikzcd}
		\ordext(X, E)\dar \ar[r,leftrightarrow] &\orb(G \acts X \times \ordext(G))\dar["id \times \proj_{\pord(G)}"]\\
		\pord(X, E) \ar[r, leftrightarrow] \dar & \orb(G \acts X \times \pord(G))\dar\\
		(X, E) \ar[r, leftrightarrow] &\orb(G \acts X)
	\end{tikzcd}
\end{equation*}
\end{lem}
This implies that the space $\invmeas(\pord(X, \mu, E))$ of all IRO's on $E$ is isomorphic to the space $\join(G \acts(X,\mu), G \acts \pord(G))$ of all joining of the action $G \acts (X, \mu)$ with the topological action $G \acts \pord(G)$ endowed with some invariant measure. 
A trivial consequence is that there is a natural map $\theta : \invmeas(G \acts \pord(G)) \to \invmeas(\pord(X, \mu, E))$ that sends an IRO $\nu \in \invmeas(\pord(G))$ to the measure $\nu \otimes \mu$ that corresponds to the product-joining. It is easy to see that this map is a one-sided inverse of the projection $\proj_{\pord(G)}$, but the latter map is by no means injective on $\invmeas(\pord(X, \mu, E)) \simeq \join (G \acts (X, \mu), G \acts \pord(G))$ (equivalently, $\theta$ is not surjective). To see the latter we remind that by \cite[Corolary 5.4]{GLM22}, any essentially free ergodic action on the standard probability space could be realized as an IRO. In particular, if we take this action to be Bernoulli of finite entropy and take $G \acts (X, \mu)$ to be Bernoulli of finite entropy we may have many non-product joinings of $G \acts (X, \mu)$ and $G \acts \pord(G)$ (assuming that $G$ is amenable or even sofic, so that the entropy distinguishes Bernoulli actions).
\begin{lem}\label{lem: iro group and equiv}
Let $E$ be the orbit equivalence relation of an essentially free pmp Borel action of a countable group $G$ on a standard probability space $(X, \mu)$. The group $G$ has the IRO-extension property iff $E$ has the IRO-extension property.
\end{lem}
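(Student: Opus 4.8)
The plan is to run everything through the two identifications established immediately before the statement: $\pord(X,E)$ is the orbit equivalence relation of $G \acts X \times \pord(G)$ (with $\lift$ acting by $\omega \mapsto g\omega$), and $\ordext(X,E)$ is that of $G \acts X \times \ordext(G)$, the two projections being intertwined by the evident forgetful map $(x,\omega,\omega') \mapsto (x,\omega)$. Via Lemma \ref{lem: G E invariant equiv} I will pass freely between $G$-invariance and $E$-invariance of measures on these products, and via Lemma \ref{lem: iro fiberwise eq full} I will phrase the IRO-extension property of $E$ purely in terms of $G$-invariant measures on $X \times \pord(G)$ admitting a $G$-invariant lift to $X \times \ordext(G)$.

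First I would treat the direction ``$E$ has the property $\Rightarrow$ $G$ has it''. Given an IRO $\nu$ on $G$ (an R-invariant measure on $\pord(G)$), set $\kappa = \mu \times \nu$ on $X \times \pord(G)$; it is $G$-invariant since $\mu$ is $G$-invariant and $\nu$ is R-invariant, hence $E$-invariant with $\proj_X(\kappa)=\mu$. By Lemma \ref{lem: iro fiberwise eq full} and the hypothesis on $E$ there is a $G$-invariant $\xi$ on $X \times \ordext(G)$ with $\proj_{X\times\pord(G)}(\xi)=\kappa$. Pushing $\xi$ forward to $\ordext(G)$ yields $\lambda$, which is $G$-invariant because the $G$-action commutes with this projection; and since the two routes $X\times\ordext(G)\to\ordext(G)\to\pord(G)$ and $X\times\ordext(G)\to X\times\pord(G)\to\pord(G)$ both equal $(x,\omega,\omega')\mapsto\omega$, we get $\proj_{\pord(G)}(\lambda)=\proj_{\pord(G)}(\kappa)=\nu$. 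Thus $\lambda$ extends $\nu$.

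For the converse ``$G$ has the property $\Rightarrow$ $E$ has it'', by Lemma \ref{lem: iro fiberwise eq full} it suffices, for each $G$-invariant $\kappa$ on $X\times\pord(G)$ with $\proj_X(\kappa)=\mu$, to build a $G$-invariant $\xi$ on $X\times\ordext(G)$ projecting to $\kappa$. Put $\nu=\proj_{\pord(G)}(\kappa)$, an IRO on $G$, and use the hypothesis to extend it to a $G$-invariant $\lambda$ on $\ordext(G)$ with $\proj_{\pord(G)}(\lambda)=\nu$. The obstacle is that $\kappa$ is in general not a product over $X$, so I cannot use $\mu\times\lambda$; instead I form the relatively independent joining of $\kappa$ and $\lambda$ over their common $G$-factor $(\pord(G),\nu)$. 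Disintegrating $\kappa=\int_{\pord(G)}\kappa_\omega\,d\nu(\omega)$ and $\lambda=\int_{\pord(G)}\lambda_\omega\,d\nu(\omega)$ over $\pord(G)$ (Lemma \ref{lem: measure decomposition}), with $\kappa_\omega$ carried by $X\times\{\omega\}$ and $\lambda_\omega$ by $\{\omega\}\times\ext(\omega)$, I set $\xi=\int_{\pord(G)}\kappa_\omega\otimes\lambda_\omega\,d\nu(\omega)$ on $X\times\ordext(G)$. Forgetting the $\ext$-coordinate recovers $\kappa$ since each $\lambda_\omega$ is a probability measure, so $\proj_{X\times\pord(G)}(\xi)=\kappa$.

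The real work is in checking that $\xi$ is $G$-invariant, and this is the step I expect to be the main obstacle. It rests on the equivariance of the disintegrations: because $\kappa$, $\lambda$ and $\nu$ are $G$-invariant and the projection to $\pord(G)$ is $G$-equivariant, essential uniqueness of disintegration (Lemma \ref{lem: measure decomposition}) gives $g_*\kappa_\omega=\kappa_{g\omega}$ and $g_*\lambda_\omega=\lambda_{g\omega}$ for $\nu$-a.e.\ $\omega$. Applying $g$ to the defining integral, distributing over the product $\kappa_\omega\otimes\lambda_\omega$, and changing variables $\omega\mapsto g^{-1}\omega$ using $g_*\nu=\nu$ returns $\xi$; this is precisely the standard argument that a relatively independent joining of $G$-invariant measures over a common $G$-invariant factor is $G$-invariant. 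Then $\xi$ is $E$-invariant by Lemma \ref{lem: G E invariant equiv}, and Lemma \ref{lem: iro fiberwise eq full} concludes that $E$ has the IRO-extension property, completing the equivalence.
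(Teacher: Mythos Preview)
Your proposal is correct and follows essentially the same route as the paper: in both directions you use the identifications $\pord(X,E)\simeq X\times\pord(G)$ and $\ordext(X,E)\simeq X\times\ordext(G)$ together with Lemmata \ref{lem: G E invariant equiv} and \ref{lem: iro fiberwise eq full}, and in the harder direction you build $\xi$ as the relatively-independent joining of $\kappa$ and $\lambda$ over their common factor $(\pord(G),\nu)$. The paper's proof is the same, except it simply cites the relatively-independent joining from \cite{Gl03} rather than writing out the disintegration and the equivariance check you give; your explicit verification is a welcome elaboration, not a departure.
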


\begin{proof}
Throwing away the set of measure $0$, we may assume that the $G$-action is free so that the conclusion of Lemma \ref{lem: cd for free actions} holds.
 
For one direction, assume that $E$ has the IRO-extension property. We want to prove that group $G$ has the IRO-extension property. Let $\nu$ be an invariant measure on $\pord(G)$. Consider the measure $\kappa = \mu \otimes \nu$ on $X \times \pord(G)$. By Lemma \ref{lem: G E invariant equiv}, $\kappa$ is an invariant measure for the equivalence relation on $\pord(X,E)$, so there is an invariant measure $\xi$ on $\ordext(X, E) \simeq X \times \ordext(G)$ such that $\proj_{X \times \pord(G)} \xi = \kappa$, by our assumption and Lemma \ref{lem: iro fiberwise eq full}. Note that $\xi$ is $G$-invariant, again by Lemma \ref{lem: G E invariant equiv}, and that $\proj_{\pord(G)}(\xi) = \nu$. So $\proj_{\ordext(G)}(\xi)$ is the measure on $\ordext(G)$ required by the definition of the IRO-extension property for $G$.

For the other direction. Assume that $G$ has the IRO-extension property. We want to prove that $E$ has the IRO-extension property. We will use the equivalent definition given by Lemma \ref{lem: iro fiberwise eq full}. Let $\kappa$ be an invariant measure on $\pord(X,E) \simeq X \times \pord(G)$ such that $\proj_{X} (\kappa) = \mu$. We want to prove that there is an invariant measure $\xi$ on $\ordext(X,E) \simeq X \times \ordext(G)$ such that $\proj_{\pord(X, E)}(\xi) = \kappa$. Let $\nu = \proj_{\pord(G)}(\kappa)$. By the IRO-extension property for $G$, there is an invariant measure $\lambda$ on $\ordext(G)$ such that $\proj_{\pord(G)}(\lambda) = \nu$. Note that we may consider $\lambda$ as an invariant measure on $\pord(G) \times \ord(G)$ concentrated on the closed subset $\ordext(G)$. Since $\proj_{\pord(G)}(\kappa) = \proj_{\pord(G)}(\lambda) = \nu$, there is a triple-joining measure (for example, the relatively-independent joining, see \cite[Example, p. 126]{Gl03}) $\xi$ on $X \times \pord(G) \times \ord(G)$ such that $\proj_{X \times \pord(G)}(\xi) = \kappa$ and $\proj_{\pord(G) \times \ord(G)}(\xi) = \lambda$. We note that $\xi$ is a $G$-invariant measure on $X \times \pord(G) \times \ord(G)$ that is concentrated on $X \times \ordext(G)$ (since $\lambda = \proj_{\pord(G) \times \ord(G)}(\xi)$ is concentrated on $\ordext(G)$). We conclude, by Lemma \ref{lem: G E invariant equiv}, that $\xi$ is the required measure on $\ordext(X, E)$.

\end{proof}

\begin{theor}[Gaboriau-Lyons, \cite{GL09}]\label{thm: gaboriau-lyons}
For any non-amenable group $G$, the orbit equivalence relation of the Bernoulli action $G \acts [0,1]^G$ has an orbit equivalence relation of an essentially free pmp action of the free group $F_2$ as a subequivalence relation.
\end{theor}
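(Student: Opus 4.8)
The plan is to follow the percolation-theoretic route of Gaboriau and Lyons \cite{GL09}: build the promised $F_2$-subrelation from the connected clusters of a Bernoulli bond percolation on a Cayley graph of $G$, and recognize it through Hjorth's theorem that an ergodic treeable pmp equivalence relation of cost strictly greater than $1$ contains the orbit relation of an essentially free ergodic action of $F_2$. First I would fix a finite symmetric generating set $S$ of $G$ for which the associated Cayley graph has a non-uniqueness percolation phase, i.e. $p_c < p_u$; by the theorem of Pak and Smirnova-Nagnibeda every non-amenable group admits such a generating set. Encoding per-edge randomness inside the labels (using $[0,1]^G \cong ([0,1]^S)^G$) yields a $G$-equivariant Borel map $x \mapsto \omega(x) \in \{0,1\}^{\mathrm{edges}}$ realizing Bernoulli$(p)$ bond percolation for a fixed $p \in (p_c, p_u)$. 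The cluster equivalence relation $\mathcal R$, where $x \mathcal R y$ iff $y = g^{-1}x$ for some $g$ joined to $e$ inside the open cluster of $\omega(x)$, is then by construction a subrelation of the Bernoulli orbit relation, and lives on $[0,1]^G$ itself.

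Next I would restrict attention to the positive-measure event that the root lies in an infinite cluster (almost surely there are infinitely many such clusters, since $p_c < p < p_u$), and establish the three hypotheses of Hjorth's theorem for the resulting relation $\mathcal R_\infty$. Ergodicity I would deduce from the Lyons--Schramm indistinguishability theorem, which forces the infinite clusters to form a single indistinguishable type and hence makes $\mathcal R_\infty$ ergodic. Treeability I would obtain by exhibiting an invariant acyclic graphing that spans each infinite cluster, concretely an invariant minimal spanning forest of $\omega$; since a relation generated by an acyclic graphing is treeable, this shows $\mathcal R_\infty$ is treeable.

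The decisive and most delicate point is the cost bound $\mathrm{cost}(\mathcal R_\infty) > 1$. Using that the chosen treeing is a spanning forest, its cost equals $\tfrac12 \mathbb E[\deg]$, so it suffices to show that the expected degree of the root in the spanning forest exceeds $2$. I would derive this from the fact that, in the non-uniqueness phase, every infinite cluster and its spanning tree has infinitely many ends: such branching is incompatible with the expected degree being exactly $2$, which would force one-ended trees. The infinitude of ends I would establish by a mass-transport / Burton--Keane-type counting argument combined again with indistinguishability. This percolation input, namely controlling the number of ends and thereby pushing the expected degree strictly above $2$, is where the real work lies and is the main obstacle.

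Finally, Hjorth's theorem applied to $\mathcal R_\infty$ produces an essentially free ergodic action of $F_2$ whose orbit relation is contained in $\mathcal R_\infty \subseteq \mathcal R \subseteq E_G$ on the infinite-cluster event, where $E_G$ denotes the orbit relation of the Bernoulli action. Inducing this relation over the full space $[0,1]^G$, and filling in the complementary (finite-cluster) part with additional independent randomness so as to preserve essential freeness while remaining a subrelation of $E_G$, yields the orbit relation of an essentially free pmp $F_2$-action that is a subequivalence relation of the Bernoulli orbit relation, as required.
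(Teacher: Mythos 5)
This theorem is not proved in the paper at all: it is imported as a black box from \cite{GL09} (it is the engine behind Theorem A), so there is no internal argument to compare against; the right benchmark is Gaboriau and Lyons' original proof, and your outline does reproduce its genuine architecture --- the Pak--Smirnova-Nagnibeda theorem providing a Cayley graph with $p_c<p_u$, equivariant encoding of Bernoulli$(p)$ bond percolation with $p\in(p_c,p_u)$ into $[0,1]^G$, the cluster subrelation restricted to the infinite-cluster event, Lyons--Schramm indistinguishability for ergodicity, spanning forests and an expected-degree bound for the cost estimate, and Hjorth's theorem (ergodic, treeable, cost $>1$ implies a free ergodic $F_2$-subrelation), followed by passage back to the full space. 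These are the correct ingredients.

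There is, however, one step you state as routine that is in fact either false or tantamount to an unproved assertion: that the invariant minimal spanning forest ``spans each infinite cluster'', i.e.\ consists of exactly one tree per cluster. If that were available, the whole proof would indeed close as you describe: $\mathcal{R}_\infty$ itself would be treeable, a connected spanning subgraph inherits the infinitely many ends of the cluster, mass transport gives $\mathbb{E}[\deg]>2$, and Hjorth applies directly because you already have ergodicity of $\mathcal{R}_\infty$ from Lyons--Schramm. But the free minimal spanning forest of an infinite connected graph need not be connected, so what the forest actually yields is a treeable \emph{sub}relation $\mathcal{S}$ possibly strictly smaller than $\mathcal{R}_\infty$ (treeability of the full cluster relation is not established and is not claimed in \cite{GL09}); and wired-type invariant forests, which are better behaved, typically have one-ended trees, i.e.\ expected degree exactly $2$ and cost $1$, hence are useless here. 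This breaks your argument in two places. First, the strict bound $\mathbb{E}[\deg]>2$ for the free forest is a substantive theorem of Lyons--Peres--Schramm and does not follow formally from the clusters having infinitely many ends, since an infinitely-ended graph can carry an invariant one-ended spanning forest --- you did flag this as the hard point, which is fair. Second, and more seriously, ergodicity --- a hypothesis of Hjorth's theorem --- is now needed for $\mathcal{S}$ (or for some treeable relation containing it), and it does \emph{not} follow from Lyons--Schramm indistinguishability, which requires insertion tolerance that minimal-spanning-forest processes lack; supplying exactly this is part of the real content of \cite{GL09}. Finally, your closing move (apply Hjorth on the infinite-cluster event, then induce and ``fill in the complementary part with additional independent randomness'') is not well defined as stated; the standard order is the reverse: given an ergodic treeable cost-$>1$ subrelation on the infinite-cluster event, first enlarge it to the whole space by attaching the complement along an acyclic family of partial isomorphisms inside the ambient orbit relation (this preserves treeability and keeps cost $>1$), and only then apply Hjorth once, on the full space, to obtain the essentially free $F_2$-action whose orbit relation sits inside the Bernoulli orbit relation.
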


We are ready to prove that nonamenable groups have no IRO-extension property.

\begin{proof}[Proof of Theorem \ref{thm: main group}]
Let $E_2$ be the orbit equivalence relation of the essentially free pmp $G$-action from the Gaboriau-Lyons theorem (Theorem \ref{thm: gaboriau-lyons}) and $E_1$ be the orbit equivalence relation of the essentially free pmp action of the free group $F_2$. By Theorem \ref{thm: free no iro}, $F_2$ has no IRO-extension property, so $E_1$ has no IRO-extension property by Lemma \ref{lem: iro group and equiv}. this implies that $E_2$ has no IRO-extension property, by Lemma \ref{lem: iro equiv hereditary}, so $G$ has no IRO-extension property by Lemma \ref{lem: iro group and equiv}.
\end{proof}

\begin{lem}\label{lem: amenable equiv iro}
An ergodic amenable pmp countable equivalence relation on a standard probability space has the IRO-extension property. 
\end{lem}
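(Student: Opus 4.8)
The plan is to reduce the statement to the group case via the rigidity of amenable equivalence relations. The crucial observation is that the IRO-extension property is an isomorphism invariant of pmp countable Borel equivalence relations: an isomorphism between two such relations induces, in the obvious fiberwise manner, isomorphisms of the associated actions $\pord(X,E)$ and $\ordext(X,E)$ that commute with the fiber projection to the base and with the forgetful map $\ordext(X,E) \to \pord(X,E)$. Consequently invariant measure-sections on $\pord(X,E)$ and their extensions to $\ordext(X,E)$ transport across the isomorphism. Given this, it suffices to produce a \emph{single} ergodic amenable pmp countable Borel equivalence relation possessing the IRO-extension property, since by Theorem \ref{thm: amenability} all ergodic amenable pmp countable equivalence relations on standard probability spaces are mutually isomorphic.

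For the witness I would take the orbit equivalence relation $E$ of a free ergodic pmp action of $\Z$ on a standard probability space, for instance the irrational rotation of the circle or the Bernoulli shift $\Z \acts [0,1]^\Z$; both actions are essentially free and ergodic. Since $\Z$ is amenable, the relation $E$ is amenable by Theorem \ref{thm: amenability}, and it is ergodic by the choice of action. Now $\Z$ has the IRO-extension property by Theorem \ref{thm: amenable gp has iro}, so by Lemma \ref{lem: iro group and equiv} the orbit equivalence relation $E$ of this free action also has the IRO-extension property. This supplies the required example.

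Combining the two steps: $E$ is an ergodic amenable relation with the IRO-extension property, and by the isomorphism invariance established first, every ergodic amenable pmp countable Borel equivalence relation on a standard probability space is isomorphic to $E$ and therefore inherits the property.

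The only content beyond citing earlier results is the isomorphism invariance of the IRO-extension property, and this is the step to treat with care: one must verify that a relation isomorphism genuinely intertwines the order-action and order-extension-action structures, namely that after transporting the equivalence class $[x]_E$ to $[x']_{E'}$ the fiberwise identification $\omega \mapsto \omega$ respects $\ext(\cdot)$ and the invariance condition on measure-sections, so that an invariant extending measure-section on one relation pulls back to one on the other. Everything else is a direct appeal to Theorem \ref{thm: amenability}, Theorem \ref{thm: amenable gp has iro}, and Lemma \ref{lem: iro group and equiv}.
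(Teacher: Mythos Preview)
Your proof is correct and follows essentially the same route as the paper: invoke the uniqueness of the ergodic amenable pmp equivalence relation (Theorem~\ref{thm: amenability}) to identify the given relation with the orbit equivalence relation of a free ergodic action of an amenable group, then apply Theorem~\ref{thm: amenable gp has iro} and Lemma~\ref{lem: iro group and equiv}. The paper's proof is terser---it leaves the isomorphism-invariance of the IRO-extension property implicit and does not fix a specific witness group---but the argument is the same.
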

\begin{proof}
By the Theorem \ref{thm: amenability}, a countable ergodic pmp amenable equivalence relation is isomorphic to the orbit equivalence relation of any essentially free pmp action of a countable amenable group. We are done by Lemma \ref{lem: iro group and equiv} together with Theorem \ref{thm: amenable gp has iro}.
\end{proof}

\begin{lem}\label{lem: equiv extesnion IRO}
Let $E$ be an ergodic pmp Borel equivalence relation on standard probability space $(X, \mu)$. Let $(Y, E_Y)$ be its measure-theoretical class-bijective extension. If $E_Y$ has the IRO extension property then so does $E$.
\end{lem}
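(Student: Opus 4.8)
The plan is to mimic the proof of Lemma~\ref{lem: iro equiv hereditary}, with the class-bijective extension $p\colon Y\to X$ playing the role formerly played by the inclusion $E_1\subset E_2$, and to phrase everything in terms of invariant measures via Lemma~\ref{lem: iro fiberwise eq full}. Write $\nu$ for the $E_Y$-invariant probability measure on $Y$, so that $p(\nu)=\mu$. Starting from an invariant measure $\kappa$ on $\pord(X,E)$ with $\proj_X(\kappa)=\mu$, I would first lift it to an invariant measure upstairs on $\pord(Y,E_Y)$, then extend it there using the hypothesis on $E_Y$, and finally push the extension back down to $\ordext(X,E)$, checking at the end that it projects onto $\kappa$.

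The first ingredient is the transfer of orders along the class-bijection. Since $p$ restricts to a bijection $[y]_{E_Y}\to[p(y)]_E$ for every $y$, an order on $[y]_{E_Y}$ pushes forward to an order on $[p(y)]_E$; this gives Borel maps $q\colon\pord(Y,E_Y)\to\pord(X,E)$ and $\bar q\colon\ordext(Y,E_Y)\to\ordext(X,E)$, sending $(y,\omega)\mapsto(p(y),p(\omega))$ and $(y,\omega,\omega')\mapsto(p(y),p(\omega),p(\omega'))$. Routine checks show that $q$ and $\bar q$ are class-bijective factor maps of Borel actions of $(X,E)$ (they commute with the projections to $X$ and restrict to bijections on classes), that they intertwine the forgetful projections $\ordext\to\pord$, and that they are equivariant for the lift maps $\lift_{X,\cdot}(x_1,x_2)$. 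The same transfer identifies $\pord(Y,E_Y)$ with the product $Y\otimes_{(X,E)}\pord(X,E)$, under which $q$ becomes the projection onto the second factor.

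Under this identification I would take the lift $\kappa'$ of $\kappa$ to be the relatively-independent product of $\nu$ and $\kappa$ over $(X,\mu,E)$, namely
\[
\kappa'=\int_X \bigl(\nu|_x^{p}\bigr)\otimes\bigl(\kappa|_x\bigr)\,d\mu(x),
\]
with $\kappa|_x$ the conditional of $\kappa$ over $x$. By the remark following Lemma~\ref{lem: inv measure is invariant section}, $\kappa'$ is an invariant measure on $\pord(Y,E_Y)$, and its marginals are $\proj_Y(\kappa')=\nu$ and $q(\kappa')=\kappa$. Because $\proj_Y(\kappa')=\nu$, the IRO-extension property of $E_Y$ together with Lemma~\ref{lem: iro fiberwise eq full} furnishes an invariant measure $\xi'$ on $\ordext(Y,E_Y)$ with $\proj_{\pord(Y,E_Y)}(\xi')=\kappa'$. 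I would then set $\xi=\bar q(\xi')$. Since $\bar q$ intertwines the forgetful projections with $q$, one gets $\proj_{\pord(X,E)}(\xi)=q(\proj_{\pord(Y,E_Y)}(\xi'))=q(\kappa')=\kappa$, as required.

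The remaining point, and the one I expect to be the crux, is the invariance of $\xi$. Viewing $\ordext(Y,E_Y)$ as a Borel action of $(X,E)$ by composing its projection with $p$, the measure $\xi'$ is invariant with $\proj_X(\xi')=p(\nu)=\mu$, so by Lemma~\ref{lem: inv measure is invariant section} it is represented by an invariant measure-section $(\xi'_x)_{x\in X}$ over $X$; pushing this forward fiberwise by $\bar q$ and using that $\bar q$ commutes with the lift maps shows that $(\bar q(\xi'_x))_x$ is again an invariant measure-section, whose integral is $\xi$. The substantive content is thus concentrated in the lifting step: one must distribute mass over the extra fiber directions of $Y$ in an $E_Y$-invariant way, which is exactly what the relatively-independent product with $\nu$ accomplishes, and where the measure on the extension is used. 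The pushdown is then formal, but it is precisely here that class-bijectivity of the extension, rather than mere surjectivity of $p$, is essential, through the equivariance of $\bar q$ for the lift maps. Note that ergodicity of $E$ is not used in this argument.
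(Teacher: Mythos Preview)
Your proposal is correct and follows essentially the same route as the paper: identify $\pord(Y,E_Y)$ with $(Y,E_Y)\otimes_{(X,E)}\pord(X,E)$ via class-bijectivity, lift $\kappa$ by the relatively-independent product with $\nu$, extend upstairs using the hypothesis on $E_Y$, and push down along the map you call $\bar q$. Your verification of invariance for the pushforward via measure-sections is a bit more explicit than the paper's, and your observation that ergodicity is unused matches the paper's own remark.
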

The other direction is easy to prove as well but we wouldn't need it.
\begin{proof}
Let $p: Y \to X$ be the map that we have by the definition of the class-bijective extension.
We use Lemma \ref{lem: iro fiberwise eq full}. Let $\kappa$ be an invariant measure on $\pord(X,E)$ such that $\proj_X(\kappa) = \mu$. We want to obtain an invariant measure $\xi$ on $\ordext(X,E)$ such that $\proj_{\pord(X,E)}(\xi) = \kappa$. 
We have the following commutative diagram, dashed arrows represent lifts of measures. 
\[\begin{tikzcd}
\ordext(Y, E_Y) \ar[r, leftrightarrow] \ar[d] 
& (Y, E_Y) \otimes_{(X,E)} \ordext(X, E) \ar[r] \ar[d, "id \otimes \proj"] 
& \ordext(X,E) \ar[d]\\
\pord(Y, E_Y) \ar[r, leftrightarrow] \ar[u, bend left, dashed,"\text{assumption}"]& (Y, E_Y) \otimes_{(X,E)} \pord(X, E) \ar[r] & \pord(X,E) \ar[l, bend left, dashed, "\text{relatively-independent product}"]
\end{tikzcd}\]
The right square is trivial. For the left we remind that by definition of the class-bijective map, the restriction of $p$ is a bijection between $[y]_{E_Y}$ and $[p(y)]_E$. Thus we get a natural isomorphism between $\pord([y]_{E_Y})$ and $\pord([p(y)]_E)$. Similarly, there is a natural isomorphism between $\ordext([y]_{E_Y})$ and $\ordext([p(y)]_E)$. Now we note that $\ordext(Y, E_Y)$ could be considered as a set of tuples $(x, y, \omega, \omega')$ such that $x \in X$, $y \in Y$, $p(y) = x$, $\omega \in \pord([y]_{E_Y})$ and $\omega' \in \ext(\omega)$. On the other hand, $(Y, E_Y) \otimes_{(X,E)} \ordext(X, E)$ is the set of all tuples $(x, y, \omega, \omega')$, such that $x \in X$, $y \in Y$, $p(y) = x$, $\omega \in \pord([x]_E)$ and $\omega' \in \ext(\omega)$. Now the isomorphism between $\ordext(Y, E_Y)$ and $(Y, E_Y) \otimes_{(X,E)} \ordext(X, E)$ is quite trivial after identifying $\pord([y]_{E_Y})$ with $\pord([x]_E)$. The vertical projections simply erase the $\omega'$ coordinate from the tuple. 

We equip $(Y, E_Y) \otimes_{(X,E)} \pord(X, E)$ with the relatively-independent product-measure and push it to obtain an invariant measure $\lambda$ on $\pord(Y, E_Y)$. Using the IRO-extension property for $E_Y$, we lift that measure to an invariant measure $\gamma$ on $\ordext(Y, E_Y)$. We push it into an invariant measure $\xi$ on $\ordext(X,E)$. By the construction and commutativity of the diagram, $\proj_{\pord(X,E)}(\xi) = \kappa$. 
\end{proof}

\begin{lem}\label{lem: non-amenable equiv no iro}
An ergodic non-amenable countable Borel pmp equivalence relation on a standard probability space has no IRO-extension property.
\end{lem}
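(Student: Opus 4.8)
The plan is to reduce the non-amenable case to the free group case by invoking the generalization of the Gaboriau--Lyons theorem due to Bowen, Hoff and Ioana, which produces, for any non-amenable ergodic pmp countable Borel equivalence relation $E$, a class-bijective extension $(Y, E_Y)$ together with a subequivalence relation $E_1 \subset E_Y$ that is the orbit equivalence relation of an essentially free pmp action of $F_2$. This mirrors exactly the strategy used for groups in the proof of Theorem \ref{thm: main group}, but one extra layer (the passage to a class-bijective extension) is needed because, unlike in the group setting, we cannot directly find an $F_2$-subrelation inside $E$ itself.

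First I would invoke the Bowen--Hoff--Ioana theorem to obtain the extension $(Y, E_Y)$ of $E$ and the $F_2$-subrelation $E_1 \subset E_Y$. By Theorem \ref{thm: free no iro} the free group $F_2$ has no IRO-extension property, hence by Lemma \ref{lem: iro group and equiv} the relation $E_1$ has no IRO-extension property. Next, applying the contrapositive of Lemma \ref{lem: iro equiv hereditary} to the inclusion $E_1 \subset E_Y$, I conclude that $E_Y$ has no IRO-extension property. Finally, the contrapositive of Lemma \ref{lem: equiv extesnion IRO} (with $(Y, E_Y)$ the class-bijective extension of $E$) yields that $E$ has no IRO-extension property, which is the desired conclusion.

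The chain of reductions is entirely formal once the input theorem is in hand: each step is a direct citation of a lemma already established in the excerpt, read in its contrapositive form. The only genuinely substantive ingredient is the Bowen--Hoff--Ioana generalization of Gaboriau--Lyons, whose statement must be recalled and whose hypotheses (ergodicity, non-amenability, the pmp condition on a standard probability space) must be checked to match those of the present lemma. The main obstacle, therefore, is not in the logical assembly but in confirming that the Bowen--Hoff--Ioana output has precisely the shape required to feed into Lemma \ref{lem: equiv extesnion IRO}, namely that the $F_2$-subrelation lives inside a \emph{class-bijective} measure-theoretic extension of $E$ rather than inside $E$ directly; the class-bijective extension is exactly the device that absorbs this discrepancy, which is why Lemma \ref{lem: equiv extesnion IRO} was proved in advance.
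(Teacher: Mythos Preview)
Your proposal is correct and follows essentially the same approach as the paper: invoke the Bowen--Hoff--Ioana theorem to obtain a class-bijective extension containing an $F_2$-orbit subrelation, then chain Theorem~\ref{thm: free no iro}, Lemma~\ref{lem: iro group and equiv}, Lemma~\ref{lem: iro equiv hereditary}, and Lemma~\ref{lem: equiv extesnion IRO} in their contrapositive forms. Your commentary on why the class-bijective extension is the needed extra layer is accurate and matches the paper's reasoning.
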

\begin{proof}
By the main theorem of \cite{BHI18}, any ergodic non-amenable countable pmp Borel equivalence relation 
has a class-bijective measure-theoretical extension (so-called Bernoulli extension) that contains an orbit equivalence relation of an essentially free pmp action of the free group $F_2$ as a subequivalence relation. By Theorem \ref{thm: free no iro} and Lemmata \ref{lem: iro group and equiv}, \ref{lem: iro equiv hereditary}, the said extension has no IRO-extension property, using Lemma \ref{lem: equiv extesnion IRO}, we conclude that our initial equivalence relation has no IRO-extension property.
\end{proof}

Lemmata \ref{lem: amenable equiv iro} and \ref{lem: non-amenable equiv no iro} together imply that an ergodic countable pmp Borel equivalence relation on a standard probability space has the IRO-extension property iff it is amenable, thus proving Theorem \ref{thm: equiv IRO}.

\end{document}